\newtheorem{theorem}{Theorem}
\numberwithin{theorem}{section} 
\numberwithin{equation}{section}
\newtheorem{corollary}{Corollary}
\numberwithin{corollary}{section} 
\newtheorem{prop}{Proposition}
\numberwithin{prop}{section} 
\newtheorem{exemp}{Example}
\numberwithin{exemp}{section} 
\newtheorem{lemma}{Lemma}
\numberwithin{lemma}{section}
\theoremstyle{definition}
\newtheorem{obs}{Remark}
\theoremstyle{definition}
\newtheorem{definition}{Definition}
\numberwithin{definition}{section}
\DeclareMathOperator{\T}{\mathbb{T}}
\DeclareMathOperator{\G}{\mathnormal{\mathbb{T}^{m}\times\mathbb{R}^n}}
\DeclareMathOperator{\R}{\mathbb{R}}
\DeclareMathOperator{\Z}{\mathbb{Z}}
\DeclareMathOperator{\N}{\mathbb{N}}
\DeclareMathOperator{\s}{\mathcal{S}(\G)}
\DeclareMathOperator{\g}{\mathbb{T}^1\times\mathbb{R}}
\DeclareMathOperator{\Real}{\text{Re}}
\DeclareMathOperator{\Imag}{\text{Im}}
\DeclareFontFamily{U}{mathx}{\hyphenchar\font45}
\DeclareFontShape{U}{mathx}{m}{n}{
      <5> <6> <7> <8> <9> <10>
      <10.95> <12> <14.4> <17.28> <20.74> <24.88>
      mathx10
      }{}
\DeclareSymbolFont{mathx}{U}{mathx}{m}{n}
\DeclareMathAccent{\widecheck}{0}{mathx}{"71}
\DeclareMathOperator{\ftil}{\widehat{\widehat{\mathnormal{f}\,}}\!\!\,}
\DeclareMathOperator{\util}{\widehat{\widehat{\mathnormal{u}\,}}\!\!\,}
\DeclareMathOperator{\vtil}{\widehat{\widehat{\mathnormal{v}\,}}\!\!\,}
\DeclareMathOperator{\Lvtil}{\widehat{\widehat{\mathnormal{Lv}\,}}\!\!\,}
\newcommand{\defeq}{\vcentcolon=}
\title{Fourier Analysis on $\T^m\times\R^n$ and applications to global hypoellipticity}
\author{André Pedroso Kowacs}
\subjclass{Primary: 42B05. Secondary: 42B35, 42B37, 58D25, 35H10}
\keywords{Fourier transform, Fourier series, Torus, Global hypoellipticity}
\begin{document}
\begin{abstract}
    This article presents a convenient approach to Fourier analysis for the investigation of functions and distributions on $\T^m \times \R^n$. Our approach involves the utilization of a mixed Fourier transform, incorporating both partial Fourier series on the torus on the first $m$ variables and partial Fourier transform in Euclidean space on the remaining variables. By examining the behaviour of the mixed Fourier coefficients, we achieve a comprehensive characterization of the spaces of rapidly decreasing smooth functions and distributions in this context. Additionally, we apply our results to derive necessary and sufficient conditions for the global hypoellipticity of a class first-order differential operators defined on $\T^1 \times \R$, including all constant coefficient first-order differential operators.   
\end{abstract}
\maketitle

\section{Introduction}
Fourier analysis has proven to be a powerful tool for studying partial differential equations not only in Euclidean space, but also in various smooth manifolds.  In particular, it has been extensively used in compact manifolds, such as the torus, in many recent papers as we may cite: \cite{Araujo2019},
\cite{Bergamasco1994}, \cite{Bergamasco1993}, \cite{Bergamasco2004},  \cite{Rafaeltoro}, \cite{bergamasco2015}, \cite{deAvilaSilva2018},  \cite{deAvilaSilva2018_2}, \cite{Gramchev1992}, \cite{Greenfield1972}, \cite{Greenfield1973}, \cite{Hounie1979}, \cite{Kirilov2019_2}, \cite{Kirilov2019}, 
\cite{KIRILOV2021108806}, \cite{KIRILOV2021699}. However, despite its wide-ranging applications, the study of differential operators on the product of the torus $\T^m = (\R/2\pi\Z)^m$ with $\R^n$, as studied by Avila and Cappielo in \cite{silva2025,DEAVILASILVA2022109418,silva2025solvable,silva2024systemsdifferentialoperatorstimeperiodic}, remains relatively unexplored.\\
\indent This paper focuses on developing a basic theory for Fourier analysis through partial Fourier transform on the the product space $\T^m\times\R^n$. Note that functions on this space can be identified with functions on $\R^{m+n}$ that are periodic on the first $m$ variables. We begin by establishing an appropriate space of test functions and distributions in this setting. By analyzing the behavior of their partial Fourier coefficients, we are able to characterize these functions and distributions, thereby enabling us to examine the global properties of differential operators on this product space.\\
\indent The motivation for this study stems from the works of F. Avila and M. Cappiello in \cite{DEAVILASILVA2022109418,silva2024systemsdifferentialoperatorstimeperiodic}, and Moraes, Kirilov and Ruzhansky in \cite{KIRILOV2020102853}, which have served as significant inspiration for our research. Building upon their contributions, we delve into the investigation of constant coefficient differential operators on $\T^1 \times \R$, aiming to determine necessary and sufficient conditions for these operators to preserve global regularity of solutions (Schwartz global hypoellipticity).

\section{Partial Fourier Analysis}
Let $m,n\in\N$. We denote by $\T^m\times\R^n$ the product of $\T^m = (\R/2\pi\Z)^m$, the $m$-dimensional torus and the $n$-dimensional Euclidean space $\R^n$. We also denote by $\mathcal{S}(\R^n)$ the Schwartz space on $\R^n$, and by $\mathcal{S}'(\R^n)$ its topological dual, also known as the space of tempered distributions. 
\begin{definition}\label{defipartialfourier}
    Let $f\in L^1(\G)$. Define its partial Fourier transform by
    $$\mathcal{F}_{\R^n}(f)(t,\xi)=\widehat{f}(t,\xi)\defeq \int_{\R^n}f(t,x)e^{-ix\cdot\xi}dx,
    $$
    for a.e. $(t,\xi)\in \G$, and
     $$\mathcal{F}_{\T^m}(f)(k,x)=\widehat{f}(k,x)\defeq \frac{1}{(2\pi)^m}\int_{[0,2\pi]^m}f(t,x)e^{-ik\cdot t}dt,
     $$
     for all $k\in\Z^m$ and a.e. $x\in\R^n$.
     Also define its mixed partial Fourier transform by
     \begin{align*}
         \mathcal{F}_{\T^m}(\mathcal{F}_{\R^n}(f))(k,\xi) = \ftil(k,\xi)&\defeq\frac{1}{(2\pi)^m}\int_{[0,2\pi]^m}\int_{\R^n}f(t,x)e^{-i(k\cdot t+x\cdot\xi)}dxdt,
     \end{align*}
     for all $k\in\Z^m$ and a.e. $\xi\in\R^n$.
     Note that by Fubini's Theorem, these are all well defined and moreover $\widehat{f}(\cdot,\xi)\in L^1(\T^m)$, for every $\xi\in\R^n$ and $\widehat{f}(k,\cdot),\ftil(k,\cdot)\in L^1(\R^n)$ for every $k\in\Z^m$. Furthermore, the following inequalities hold
     $$|\widehat{f}(t,\xi)|\leq \|f(t,\cdot)\|_{L^1(\R^n)},$$
     for almost every $t\in\T^m$, 
     $$|\widehat{f}(k,x)|\leq \frac{1}{(2\pi)^m}\|f(\cdot,x)\|_{L^1(\T^m)},$$
     for almost every $x\in\R^n$,
     $$|\ftil(k,\xi)|\leq \frac{1}{(2\pi)^m}\|f\|_{L^1(\T^m\times\R^n)},$$
     for almost every $(k,\xi)\in\Z^m\times\R^n$.
\end{definition}
In order to extend this definition to the appropriate set of distributions, we need to consider an appropriate space of test functions. While $\mathcal{S}(\R^n)$ and $C^\infty(\T^m)$ are adequate spaces of test functions in $\R^n$ and $\T^m$, respectively, for $\G$ we need a space of test functions which decay fast enough in $x$ uniformly in $t$. This motivates the following definition.
\begin{definition}
    Let $f\in C^\infty{(\G)}$. For each multi-index $\alpha\in\N^m_0,\,\beta,\gamma\in\N^n_0$, the real number
$$\|f\|_{\alpha,\beta,\gamma}\defeq\sup_{\substack{(t,x)\in\T^{m}}\times\R^n}|x^\gamma\partial_t^\alpha\partial_x^\beta f(t,x)|.$$
     Define $\mathcal{S}(\G)$ to be the set of all smooth functions on $\G$ which satisfy  $\|f\|_{\alpha,\beta,\gamma}<+\infty$ for every multi-index $\alpha\in\N_0^m,\,\beta,\gamma\in\N_0^n$
    with the topology induced by the countable family of (semi)norms:
    $$p_N(f)\defeq \sum_{|\alpha|+|\beta|+|\gamma|\leq N}\|f\|_{\alpha,\beta,\gamma},\qquad N\in\N_0.$$
    This provides $\mathcal{S}(\G)$ with a Fréchet space structure. With this in mind, we define $\mathcal{S}'(\G)$ to be its dual space, that is, the space of all linear functionals $u:\mathcal{S}(\G)\to\mathbb{C}$ such that there exist $N=N_u>0, C=C_u>0$ such that
    $$|\langle u,f\rangle|\leq Cp_N(f),$$
    for every $f\in \mathcal{S}(\G).$
\end{definition}
\begin{obs}
    Note that every continuous function $u:\T^m\times\R^n\to\mathbb{C}$ which satisfies
    $$|u(t,x)|\leq C(1+|x|^2)^{N/2},$$
    for some $C,N>0$ and all $(t,x)\in\G$ induces an element of $\mathcal{S}'(\G)$ by
    \begin{equation*}
        f\mapsto \int_{\G}u(t,x)f(t,x)dtdx.
    \end{equation*}
\end{obs}
\begin{obs}
    It is clear that $C^\infty(\T^m)\otimes\mathcal{S}(\R^n)\subset\s$, however, note that this inclusion is strict, that is
    $$\{f\in C^\infty(\G)| f(t,\cdot)\in\mathcal{S}(\R^n),\,\forall t\in \T^m\}\supsetneq\mathcal{S}(\G).$$
    To see why that is the case, consider that the function $f:\T^1\times\R\to\mathbb{C}$ given by
    \begin{align*}
    f(t,x) = \left\{
        \begin{alignedat}{3}
            &\tan(t)\exp\left(\frac{-1}{1-\left(x-\tan(t)\right)^2}\right),\quad&&\text{ for }|x-\tan(t)|<1,\,t\neq k\frac{\pi}{2},\,k\in\Z,\\
            &0,&&\text{ otherwise.}
        \end{alignedat}\right.
    \end{align*}
    Although smooth and compactly supported in $\R$, for each $t\in\T$, it is not in $\mathcal{S}(\G)$. Indeed, as 
 $\sup_{x\in\R}|f(t,x)| = \tan(t)e^{-1}$ for each $t\in(0,2\pi)\backslash\{\frac{\pi}{2},\frac{3\pi}{2}\}$, we have that $\sup_{t\in\T^1}\sup_{x\in\R}|f(t,x)|=+\infty$.
\end{obs}
Next we fully characterize $\mathcal{S}(\G)$ by the decay of the partial Fourier transforms of its elements.
\begin{prop}\label{proppartialdecayrn}
    A function $f\in C^\infty(\G)\cap L^1(\T^m\times\R^n)$ is in $\mathcal{S}(\G)$ if and only if for every $\beta,\in\N_0^n$, $N>0$, there exists $C_{N,\beta}>0$, such that
    \begin{equation}{\label{ineqpartialtorus}}
        |\partial_x^\beta\widehat{f}(k,x)|\leq C_{N,\beta}(1+|k|^2)^{-N/2}(1+|x|^2)^{-N/2},
    \end{equation}
    for every $k\in\Z^m,$ $x\in\R^n$.
\end{prop}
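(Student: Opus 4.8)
The plan is to prove both implications by relating the growth/decay of $x^\gamma \partial_t^\alpha \partial_x^\beta f$ to the decay of $\partial_x^\beta \widehat f(k,x)$ in both variables, using that differentiation in $t$ becomes multiplication by $(ik)^\alpha$ on the torus side and that the Fourier coefficient map on $\T^m$ gains decay in $k$ from smoothness in $t$.

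For the forward direction, assume $f \in \mathcal{S}(\G)$. Since partial differentiation in $x$ commutes with taking Fourier coefficients in $t$, we have $\partial_x^\beta \widehat f(k,x) = \widehat{\partial_x^\beta f}(k,x)$, and integration by parts in $t$ (legitimate by periodicity and smoothness) gives $(ik)^\alpha \partial_x^\beta \widehat f(k,x) = \widehat{\partial_t^\alpha \partial_x^\beta f}(k,x)$ for every $\alpha \in \N_0^m$. Using the $L^1$-bound from Definition \ref{defipartialfourier}, namely $|\widehat g(k,x)| \le (2\pi)^{-m}\|g(\cdot,x)\|_{L^1(\T^m)}$, applied to $g = \partial_t^\alpha \partial_x^\beta f$, and then using that for such $f$ we have $|x^\gamma \partial_t^\alpha \partial_x^\beta f(t,x)| \le \|f\|_{\alpha,\beta,\gamma}$ uniformly in $t$, I get $|k^\alpha| \, |x^\gamma| \, |\partial_x^\beta \widehat f(k,x)| \le C_{\alpha,\beta,\gamma}$. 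Summing over finitely many $\alpha$ with $|\alpha| \le M$ controls $(1+|k|^2)^{M/2}$, and summing over finitely many $\gamma$ with $|\gamma| \le M$ controls $(1+|x|^2)^{M/2}$ (using the standard equivalence between $\sup_{|\gamma|\le M}|x^\gamma|$ and $(1+|x|^2)^{M/2}$ up to constants). Taking $M = N$ gives \eqref{ineqpartialtorus} with an appropriate constant $C_{N,\beta}$.

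For the converse, assume the decay estimate \eqref{ineqpartialtorus} holds. One recovers $f$ from its $t$-Fourier coefficients via the Fourier series $f(t,x) = \sum_{k \in \Z^m} \widehat f(k,x) e^{ik\cdot t}$; the estimate with, say, $N = m+1$ shows this series and all its term-by-term $x$-derivatives converge absolutely and uniformly, so $f$ is smooth in $x$ with $\partial_x^\beta f(t,x) = \sum_k \partial_x^\beta \widehat f(k,x) e^{ik\cdot t}$, and differentiating in $t$ brings down factors $(ik)^\alpha$, with the resulting series still converging absolutely and uniformly thanks to the decay in $k$ (choose $N$ large enough to beat $|k|^{|\alpha|}$ and still have $(1+|k|^2)^{-(m+1)/2}$ left over). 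This justifies that $f \in C^\infty(\G)$ and that $\partial_t^\alpha \partial_x^\beta f(t,x) = \sum_k (ik)^\alpha \partial_x^\beta \widehat f(k,x) e^{ik\cdot t}$. Then for any $\gamma$, bounding $|x^\gamma|$ by $C(1+|x|^2)^{|\gamma|/2}$ and estimating the series with \eqref{ineqpartialtorus} for $N = |\alpha| + |\gamma| + m + 1$, I obtain $|x^\gamma \partial_t^\alpha \partial_x^\beta f(t,x)| \le C_{N,\beta} (1+|x|^2)^{|\gamma|/2 - N/2} \sum_k (1+|k|^2)^{|\alpha|/2 - N/2} < \infty$ uniformly in $(t,x)$, which is exactly $\|f\|_{\alpha,\beta,\gamma} < \infty$ for all multi-indices; hence $f \in \mathcal{S}(\G)$.

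I expect the main technical point — though not really an obstacle — to be the bookkeeping in the converse: one must pick the exponent $N$ in \eqref{ineqpartialtorus} large enough, depending on $\alpha$, $\gamma$, and the dimension $m$, so that after pulling out the factors $(1+|x|^2)^{|\gamma|/2}$ and $|k|^{|\alpha|}$ there is still enough decay left to sum the series over $\Z^m$ and to conclude uniform boundedness; care is also needed to justify differentiating the Fourier series term by term (Weierstrass $M$-test on each successive derivative). The forward direction is essentially the standard integration-by-parts argument and should be routine. One should also note at the outset that the hypothesis $f \in L^1(\T^m \times \R^n)$ guarantees that $\widehat f(k,x)$ is well-defined and, together with the smoothness, that the Fourier inversion/series representation used in the converse is valid pointwise.
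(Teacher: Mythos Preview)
Your proposal is correct and follows essentially the same approach as the paper: in the forward direction you trade $t$-derivatives for powers of $k$ via integration by parts and bound the Fourier coefficient by the sup norm on the torus, while the paper packages the same idea using $(1-\Delta_t)^{N/2}(1+|x|^2)^{N/2}\partial_x^\beta f$; in the converse you both reconstruct $f$ from its Fourier series, justify term-by-term differentiation by the decay hypothesis, and choose $N$ large enough (you take $N=|\alpha|+|\gamma|+m+1$, the paper effectively the same) to make the final sum over $\Z^m$ converge and the $x$-factor bounded.
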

\begin{proof}
    First, suppose $f\in\mathcal{S}(\G)$. Note that integration by parts yields $\mathcal{F}_{\T^m}(\partial_{t_j}f)(k,x) = (ik_j)\widehat{f}(k,x)$, for every $j=1,\dots,m$. Let $\beta,$ and $N$ be as stated. Then
    $$\mathcal{F}_{\T^m}\left((1-\Delta_t)^{N/2}(1+|x|^2)^{N/2}\partial_x^\beta f\right)(k,x) = (1+|k|^2)^{N/2}(1+|x|^2)^{N/2}\partial_x^\beta \widehat{f}(k,x),$$
    so 
    \begin{align*}
        |(1+|k|^2)^{N/2}(1+|x|^2)^{N/2}&\partial_x^\beta \widehat{f}(k,x)|\leq \\
        &\leq\frac{1}{(2\pi)^m}\int_{\T^m}|\left((1-\Delta_t)^{N/2}(1+|x|^2)^{N/2}\partial_x^\beta f\right)(t,x)e^{-ik\cdot t}|dt\\
        &\leq \sup_{\substack{t\in\T^m\\x\in\R^n}}|\left((1-\Delta_t)^{N/2}(1+|x|^2)^{N/2}\partial_x^\beta f\right)(t,x)|\\
        &\leq {C}_{N,\beta}<+\infty,
    \end{align*}
 for some ${C}_{N,\beta}>0$ and every $k\in\Z^m,$ $x\in\R^n$, and so inequality (\ref{ineqpartialtorus}) follows. Conversely, suppose (\ref{ineqpartialtorus}) holds. Then these inequalities imply that the series
    $$g(t,x) \defeq \sum_{k\in\Z^m}\widehat{f}(k,x)e^{ik\cdot t}$$ converges absolutely and uniformly in $x$. Moreover, for each multi-index $\alpha\in\N_0^m$, $\beta,\gamma\in\N_0^n$, the series
    $$\sum_{k\in\Z^m}(ik)^\alpha \partial_x^\beta\widehat{f}(k,x)e^{ik\cdot t}$$
    converges absolutely and uniformly in $x$, therefore $g\in C^\infty(\G)$ and $\partial_t^\alpha \partial_x^\beta {g}(t,x)$ is given by the series above. Moreover, the usual Fourier inversion theorem implies that $g=f$ point-wise in $x$. Also, note that for every $\alpha\in\N_0^m$, $\beta,\gamma\in\N_0^n$ we have that
    \begin{align*}
        |x^{\gamma}\partial_t^\alpha\partial_x^\beta{f}(t,x)|&\leq \sum_{k\in\Z^m} |k^\alpha x^{\gamma}\partial_x^\beta\widehat{f}(k,x)|\\
        &\leq \sum_{k\in\Z^m} |k|^{|\alpha|} C_{|\alpha|+\gamma+m+1,\beta}(1+|k|^2)^{-(|\alpha|+m+1)/2}|x|^{\gamma}(1+|x|^2)^{-(|\alpha|+m+1)/2}\\
        &\leq\tilde{C}_{\alpha,\beta,\gamma}<+\infty,
    \end{align*}
 for some $\tilde{C}_{\alpha,\beta,\gamma}$, for each $(t,x)\in\G$, so that $f\in\mathcal{S}(\G)$ as claimed.
\end{proof}

With this result in mind, given a sequence of functions $\{{g}(k,\cdot)\in C^\infty(\R^n)\}_{k\in\Z^m}$ which satisfies inequalities (\ref{ineqpartialtorus}), we define its inverse partial Fourier transform by
$$\mathcal{F}^{-1}_{\T^m}(g)(t,x) = \widecheck{g}(t,x)\defeq  \sum_{k\in\Z^m}g(k,x)e^{ik\cdot t},\,(t,x)\in\G.$$
This way, from the proof of last proposition, we have that
$$\mathcal{F}^{-1}_{\T^m}(\mathcal{F}_{\T^m}(f))(t,x)=f(t,x),\,\forall f\in \mathcal{S}(\G).$$

\begin{prop}\label{propdecaypartialxi}
    A function $f\in C^\infty(\G)\cap L^1(\T^m\times\R^n)$ is in $\mathcal{S}(\G)$ if and only if for every $\alpha\in\N_0^m,\,\beta\in\N_0^n$, $N>0$, there exists $C_{N,\alpha,\beta}>0$, such that
    \begin{equation}\label{ineqpartialr}
|\partial_t^\alpha\partial_\xi^\beta\widehat{f}(t,\xi)|\leq C_{N,\alpha,\beta}(1+|\xi|^2)^{-N/2},
    \end{equation}
for every $t\in\T^m$, $\xi\in\R^n$.
\end{prop}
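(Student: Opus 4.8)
The plan is to run the argument of Proposition~\ref{proppartialdecayrn} with the roles of the two groups of variables interchanged: the new ingredients are simply the standard facts that the $\R^n$-Fourier transform interchanges multiplication by $x^\beta$ with $\partial_\xi^\beta$ (up to a unimodular constant), interchanges $\partial_x^\beta$ with multiplication by $\xi^\beta$, and satisfies $|\widehat g(\xi)|\le\|g\|_{L^1(\R^n)}$, while $\partial_t$ commutes with $\mathcal{F}_{\R^n}$. The point is that all the estimates below are uniform in $t\in\T^m$, which is exactly what makes $\widehat f$ controllable by the seminorms defining $\s$.

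For the direct implication, assume $f\in\s$ and fix $\alpha\in\N_0^m$, $\beta\in\N_0^n$ and $N>0$. It suffices to treat $N=2L$ even, since for general $N$ one replaces it by the next even integer and uses $(1+|\xi|^2)^{-N/2}\ge(1+|\xi|^2)^{-L}$. Finiteness of every seminorm $\|f\|_{\alpha',\beta',\gamma'}$ makes each $x\mapsto x^{\gamma'}\partial_t^{\alpha}\partial_x^{\beta'}f(t,x)$ absolutely integrable over $\R^n$, uniformly in $t$, so one may differentiate $\widehat f(t,\xi)=\int_{\R^n}f(t,x)e^{-ix\cdot\xi}\,dx$ under the integral sign and integrate by parts with no boundary terms. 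Writing $(1+|\xi|^2)^{L}e^{-ix\cdot\xi}=(1-\Delta_x)^{L}e^{-ix\cdot\xi}$ and moving $(1-\Delta_x)^{L}$ onto the amplitude gives
$$(1+|\xi|^2)^{L}\,\partial_t^\alpha\partial_\xi^\beta\widehat f(t,\xi)=\int_{\R^n}(1-\Delta_x)^{L}\!\big((-ix)^\beta\partial_t^\alpha f(t,x)\big)\,e^{-ix\cdot\xi}\,dx ,$$
whose modulus is at most the $L^1_x$-norm of the amplitude. By Leibniz's rule that amplitude is a finite linear combination of terms $x^{\gamma'}\partial_t^\alpha\partial_x^{\beta'}f(t,\cdot)$ with $|\gamma'|\le|\beta|$ and $|\beta'|\le 2L$, and each such term has $L^1(\R^n)$-norm at most $\big(\sup_{(t,x)}|(1+|x|^2)^{n}x^{\gamma'}\partial_t^\alpha\partial_x^{\beta'}f(t,x)|\big)\int_{\R^n}(1+|x|^2)^{-n}\,dx$, which is finite (the supremum is dominated by some $p_{N'}(f)$) and independent of $t$. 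This is precisely (\ref{ineqpartialr}).

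For the converse, assume (\ref{ineqpartialr}). Taking $\alpha=\beta=0$ with $N>n$ shows $\widehat f(t,\cdot)\in L^1(\R^n)$ for every $t$; together with $f(t,\cdot)\in L^1(\R^n)$ for a.e.\ $t$ (Fubini) and the $\R^n$ Fourier inversion theorem, we obtain $f(t,x)=(2\pi)^{-n}\int_{\R^n}\widehat f(t,\xi)e^{ix\cdot\xi}\,d\xi$ for a.e.\ $x$ and a.e.\ $t$, hence for every $(t,x)$ since both sides are continuous (the right-hand side by dominated convergence, using the $t$-uniform decay of $\widehat f$). Now fix $\alpha,\beta,\gamma$; differentiating under the integral and then integrating by parts $|\gamma|$ times via $x^\gamma e^{ix\cdot\xi}=(-i)^{|\gamma|}\partial_\xi^\gamma e^{ix\cdot\xi}$ yields
$$\big|x^\gamma\partial_t^\alpha\partial_x^\beta f(t,x)\big|\le (2\pi)^{-n}\int_{\R^n}\Big|\partial_\xi^\gamma\big((i\xi)^\beta\partial_t^\alpha\widehat f(t,\xi)\big)\Big|\,d\xi ,$$
which by Leibniz is controlled by a finite sum of $\int_{\R^n}|\xi^{\beta'}\partial_t^\alpha\partial_\xi^{\gamma'}\widehat f(t,\xi)|\,d\xi$ with $|\beta'|\le|\beta|$, $|\gamma'|\le|\gamma|$. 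Applying (\ref{ineqpartialr}) with $N=|\beta|+n+1$ bounds each integrand by $C(1+|\xi|^2)^{-(n+1)/2}$, so every integral converges and is bounded uniformly in $(t,x)$; hence all the seminorms $\|f\|_{\alpha,\beta,\gamma}$ are finite and $f\in\s$.

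I expect the bookkeeping — the Leibniz expansions, the reduction to even $N$, the choice of the auxiliary exponent — to be entirely routine. The one place that deserves a little care is the converse direction's passage from the almost-everywhere Fourier inversion identity to one valid at every point, together with the justifications for differentiating under the integral sign and integrating by parts; all of these are furnished by the decay in (\ref{ineqpartialr}) being uniform in $t$ and by the finiteness of the Schwartz seminorms.
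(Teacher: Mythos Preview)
Your proof is correct and follows essentially the same route as the paper: both directions rely on the identity $(1+|\xi|^2)^{L}\partial_\xi^\beta\partial_t^\alpha\widehat f(t,\xi)=\mathcal{F}_{\R^n}\big((1-\Delta_x)^{L}[(-ix)^\beta\partial_t^\alpha f]\big)(t,\xi)$ together with an $L^1$-estimate obtained by inserting a weight $(1+|x|^2)^{\pm M}$, and the converse reconstructs $f$ via Fourier inversion and bounds the seminorms by expanding $\partial_\xi^\gamma[(i\xi)^\beta\partial_t^\alpha\widehat f]$ with Leibniz. Your write-up is in fact a bit more careful than the paper's on two minor points (the reduction to even $N$ rather than writing $(1-\Delta_x)^{N/2}$ for arbitrary $N$, and the passage from the almost-everywhere inversion identity to a pointwise one via continuity), but the structure is identical.
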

\begin{proof}
     First, suppose $f\in\mathcal{S}(\G)$. Note that integration by parts and Lebesgue's dominated convergence theorem yield $\mathcal{F}_{\R^n}(\partial_{x_j}[(-ix_l)f])(t,\xi) = (i\xi_j)\partial_{\xi_l}\widehat{f}(t,\xi)$, for every $j,l=1,\dots,n$. Now let $\alpha,\beta$ and $N$ be as stated. Then
      $$\mathcal{F}_{\R^n}\left((1-\Delta_x)^{N/2}[(-ix)^\beta \partial_t^\alpha f]\right)(t,\xi) = (1+|\xi|^2)^{N/2}\partial_\xi^\beta\partial_t^\alpha\widehat{f}(t,\xi),$$
      so 
    \begin{align*}
        |(1+|\xi|^2)^{N/2}\partial_{\xi}^\beta\partial_t^\alpha\widehat{f}(t,\xi)|&\leq \int_{\R^n}\left|(1-\Delta_x)^{N/2}[(-ix)^\beta\partial_t^\alpha f(t,x)]e^{-ix\cdot \xi}\right|dx\\
        &\leq \sup_{\substack{t\in\T^m\\x\in\R^n}}\left|(1+|x|^2)^{\frac{n+1}{2}}(1-\Delta_x)^{N/2}[(-ix)^\beta \partial_t^\alpha f(t,x)]\right|\\
        &\times\int_{\R^n}(1+|x|^2)^{-\frac{n+1}{2}}dx\\
        &\leq C_{N,\alpha,\beta}<+\infty,
    \end{align*}
    for some $C_{N,\alpha,\beta}>0$ and for every $t\in\T^m,\,\xi\in\R^n$,
    and inequality (\ref{ineqpartialr}) follows. Conversely, suppose inequality (\ref{ineqpartialr}) holds. This implies that for each $\alpha\in\N_0^m,\,\beta,\gamma\in\N_0^n$ there exists $ C_{\alpha,\beta,\gamma}>0$,
    such that $$|\xi^\gamma\partial_{\xi}^\beta\partial_t^\alpha\widehat{f}(t,\xi)e^{ix\cdot\xi}|\leq C_{\alpha,\beta,\gamma}(1+|\xi|^2)^{-\frac{n+1}{2}},$$
    for every $t\in\T^m$, $\xi\in\R^n$. Therefore, by Lebesgue's dominated convergence theorem, it follows that the function
    $$h(t,x) = \frac{1}{(2\pi)^n}\int_{\R^n}\widehat{f}(t,\xi)e^{ix\cdot\xi}d\xi$$
    is smooth on $\G$. Moreover, by the usual Fourier inversion theorem, we have that $h=f$ point-wise in $t$. Finally, notice that for every $\alpha\in\N_0^m,\,\beta,\gamma\in\N_0^n$, we have that
    \begin{align*}
        |x^\gamma\partial_t^\alpha\partial_x^\beta{f}(t,x)|&\leq \frac{1}{(2\pi)^n}\int_{\R^n}|\partial_\xi^\gamma\xi^\beta\partial_t^\alpha\widehat{f}(t,\xi)|d\xi\\
        &\leq \frac{1}{(2\pi)^n} \int_{\R^n}|\xi|^\beta \tilde{C}_{\alpha,\beta,\gamma}(1+|\xi|^2)^{-(|\beta|+n+1)/2}d\xi\\
        &\leq \Tilde{C}'_{\alpha,\beta,\gamma}<+\infty,
    \end{align*}
    for some $\tilde{C}_{\alpha,\beta,\gamma},\tilde{C}'_{\alpha,\beta,\gamma}>0$, for each $(t,x)\in\G$, so that $f\in\mathcal{S}(\G)$.
\end{proof}
\begin{corollary}\label{corofourierschwartz}
    Let $f\in C^\infty(\G)$. Then $f\in\mathcal{S}(\T^m\times\R^n)\iff \mathcal{F}_{\R^{n}}(f)\in\mathcal{S}(\G)$.
\end{corollary}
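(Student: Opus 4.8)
The plan is to read the statement off Proposition~\ref{propdecaypartialxi}, which already characterizes $\mathcal{S}(\G)$ by the rapid-decay estimate~(\ref{ineqpartialr}) satisfied by the partial Fourier transform $\widehat f=\mathcal{F}_{\R^n}(f)$. The bridge I need is the elementary observation that, for a function $h\in C^\infty(\G)$ whose Euclidean variable we now think of as the frequency variable $\xi$, the condition
$$|\partial_t^\alpha\partial_\xi^\beta h(t,\xi)|\le C_{N,\alpha,\beta}\,(1+|\xi|^2)^{-N/2}\qquad\text{for all }\alpha\in\N_0^m,\ \beta\in\N_0^n,\ N>0$$
is \emph{equivalent} to $h\in\mathcal{S}(\G)$. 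So the first step is to prove this equivalence: one direction follows because $\sup_{(t,\xi)}|\xi^\gamma\partial_t^\alpha\partial_\xi^\beta h(t,\xi)|\le\sup_\xi|\xi|^{|\gamma|}C_{|\gamma|+1,\alpha,\beta}(1+|\xi|^2)^{-(|\gamma|+1)/2}<\infty$, giving finiteness of every seminorm $\|h\|_{\alpha,\beta,\gamma}$; the other follows because $(1+|\xi|^2)^{N/2}$ is dominated by a finite sum of monomials $|\xi^\gamma|$, so $(1+|\xi|^2)^{N/2}|\partial_t^\alpha\partial_\xi^\beta h(t,\xi)|$ is bounded by a finite combination of the seminorms $\|h\|_{\alpha,\beta,\gamma}$.

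Granting this bridge, the forward implication is immediate: if $f\in\mathcal{S}(\G)$ then $f\in C^\infty(\G)\cap L^1(\G)$, so Proposition~\ref{propdecaypartialxi} supplies the estimates~(\ref{ineqpartialr}) for $\widehat f$, and since differentiation under the integral sign (justified by $f\in\mathcal{S}(\G)$, exactly as in the proof of Proposition~\ref{propdecaypartialxi}) shows $\widehat f\in C^\infty(\G)$, the equivalence above yields $\mathcal{F}_{\R^n}(f)=\widehat f\in\mathcal{S}(\G)$.

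For the converse I would set $g:=\mathcal{F}_{\R^n}(f)\in\mathcal{S}(\G)$ and argue by bootstrapping. Since $g\in C^\infty(\G)\cap L^1(\G)$, applying the forward implication just established to $g$ in place of $f$ gives $\mathcal{F}_{\R^n}(g)\in\mathcal{S}(\G)$. On the other hand, for each fixed $t$ we have $f(t,\cdot)\in L^1(\R^n)$ (implicit in $\mathcal{F}_{\R^n}(f)$ being defined) and $g(t,\cdot)\in\mathcal{S}(\R^n)\subset L^1(\R^n)$, so Fourier inversion on the slice, together with the continuity of $f$, gives the reflection identity $\mathcal{F}_{\R^n}(g)(t,\xi)=\mathcal{F}_{\R^n}(\mathcal{F}_{\R^n}(f))(t,\xi)=(2\pi)^n f(t,-\xi)$ for all $(t,\xi)\in\G$. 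Hence $(t,\xi)\mapsto f(t,-\xi)$ lies in $\mathcal{S}(\G)$, and since every seminorm $\|\cdot\|_{\alpha,\beta,\gamma}$ is invariant under $\xi\mapsto-\xi$, we conclude $f\in\mathcal{S}(\G)$.

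I do not expect a genuine obstacle here; the only point that needs care is avoiding circularity in the converse, which is why I route it through the reflection identity $\mathcal{F}_{\R^n}^2 f(t,\xi)=(2\pi)^n f(t,-\xi)$ instead of trying to verify the $L^1(\G)$ hypothesis of Proposition~\ref{propdecaypartialxi} for $f$ by hand --- although that too is straightforward, since $f(t,\cdot)=\mathcal{F}_{\R^n}^{-1}(g(t,\cdot))\in\mathcal{S}(\R^n)$ with Schwartz seminorms bounded uniformly in $t$, forcing $f\in L^1(\G)$.
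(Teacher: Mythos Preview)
Your proposal is correct and follows essentially the same path the paper intends: the corollary is stated without proof immediately after Proposition~\ref{propdecaypartialxi}, and the implicit argument is precisely your ``bridge'' observation that the estimates~(\ref{ineqpartialr}) are equivalent to membership of $\widehat f$ in $\mathcal{S}(\G)$, together with Proposition~\ref{propdecaypartialxi} itself. Your handling of the converse via the reflection identity $\mathcal{F}_{\R^n}^2 f(t,\xi)=(2\pi)^n f(t,-\xi)$ is a clean way to sidestep the missing $L^1(\G)$ hypothesis that Proposition~\ref{propdecaypartialxi} would otherwise require, and the alternative you mention at the end (recovering $f\in L^1(\G)$ from uniform-in-$t$ Schwartz bounds on $f(t,\cdot)=\mathcal{F}_{\R^n}^{-1}g(t,\cdot)$) also works and is closer in spirit to how the paper's own Proposition~\ref{propdecaypartialxi} proof proceeds.
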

With this result in mind, for $g\in \mathcal{S}(\G)$, we define its inverse partial Fourier transform by
$$\mathcal{F}^{-1}_{\R^n}(g)(t,x) = \widecheck{g}(t,x)\defeq  \int_{\R^n} g(t,\xi)e^{ix\cdot \xi}d\xi,\,(t,x)\in\G.$$
This way, from the proof of last proposition, we have that: 
$$\mathcal{F}^{-1}_{\R^n}(\mathcal{F}_{\R^n}(f))(t,x)=f(t,x),\,\forall f\in \mathcal{S}(\G).$$
\begin{prop}\label{proppartialdecaymix}
    Let $f\in C^\infty(\G)\cap L^1(\G)$. Then $f\in \mathcal{S}(\G)$ if and only if for each $\beta\in\N_0^n$, $N>0$ there exists $C_{N,\beta}>0$ such that
    \begin{equation}\label{ineqmix}
|\xi^\gamma\partial_{\xi}^\beta\ftil(k,\xi)|\leq C(1+|k|^2)^{-N/2}(1+|\xi|^2)^{-N/2},
    \end{equation}
    for every $k\in\Z^m$, $\xi\in\R^n$.
\end{prop}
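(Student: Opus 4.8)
The plan is to reduce the statement to the two characterizations already in hand, Corollary \ref{corofourierschwartz} and Proposition \ref{proppartialdecayrn}, via the identity $\ftil = \mathcal{F}_{\T^m}(\mathcal{F}_{\R^n}(f))$: the numbers $\ftil(k,\xi)$ are exactly the $t$-Fourier coefficients on $\T^m$ of the function $g \defeq \mathcal{F}_{\R^n}(f)$. Moreover, since $|\xi^\gamma| \le (1+|\xi|^2)^{|\gamma|/2}$, the polynomial weight $\xi^\gamma$ appearing in (\ref{ineqmix}) is harmless — it can always be absorbed by replacing $N$ with $N+|\gamma|$ — so it suffices to treat the case $\gamma = 0$.

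\emph{Necessity.} Assuming $f \in \mathcal{S}(\G)$, Corollary \ref{corofourierschwartz} gives $g = \mathcal{F}_{\R^n}(f) \in \mathcal{S}(\G) \subset C^\infty(\G)\cap L^1(\G)$, so Proposition \ref{proppartialdecayrn} applies to $g$ and yields, for each $\beta\in\N_0^n$ and $N>0$, a constant $C_{N,\beta}>0$ with $|\partial_\xi^\beta\widehat g(k,\xi)| \le C_{N,\beta}(1+|k|^2)^{-N/2}(1+|\xi|^2)^{-N/2}$. Since $\widehat g(k,\xi) = \mathcal{F}_{\T^m}(\mathcal{F}_{\R^n}(f))(k,\xi) = \ftil(k,\xi)$, this is precisely (\ref{ineqmix}) with $\gamma=0$, and the general case follows from the observation above. (One may also bypass Corollary \ref{corofourierschwartz} using Proposition \ref{propdecaypartialxi}: as $\mathcal{F}_{\T^m}$ turns $\partial_t^\alpha$ into multiplication by $(ik)^\alpha$, the bound $|k^\alpha\partial_\xi^\beta\ftil(k,\xi)| \le \sup_t|\partial_t^\alpha\partial_\xi^\beta\widehat f(t,\xi)| \le C_{N,\alpha,\beta}(1+|\xi|^2)^{-N/2}$, valid for every $\alpha$, supplies the decay in $k$.)

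\emph{Sufficiency.} Assume (\ref{ineqmix}). Then the sequence $\{\ftil(k,\cdot)\}_{k\in\Z^m}$ satisfies inequalities (\ref{ineqpartialtorus}), so, running the argument from the converse part of the proof of Proposition \ref{proppartialdecayrn}, the series $g(t,\xi) \defeq \sum_{k\in\Z^m}\ftil(k,\xi)e^{ik\cdot t}$ converges absolutely and uniformly together with all its derivatives, defines a function $g \in \mathcal{S}(\G)$, and satisfies $\mathcal{F}_{\T^m}(g)(k,\xi) = \ftil(k,\xi)$ for all $k,\xi$. Next I would identify $g$ with $\mathcal{F}_{\R^n}(f)$: for every $\xi$, the functions $g(\cdot,\xi)\in C^\infty(\T^m)$ and $\widehat f(\cdot,\xi) = \mathcal{F}_{\R^n}(f)(\cdot,\xi)\in L^1(\T^m)$ (Definition \ref{defipartialfourier}) have the same Fourier coefficients $\{\ftil(k,\xi)\}_k$ on $\T^m$, hence agree a.e.\ there, and Fubini gives $g = \mathcal{F}_{\R^n}(f)$ a.e.\ on $\G$. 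Thus, for a.e.\ $t$ the Schwartz function $g(t,\cdot)$ is the Euclidean Fourier transform of $f(t,\cdot)\in L^1(\R^n)$, and since $g(t,\cdot)\in L^1(\R^n)$ as well, Fourier inversion on $\R^n$ gives $f(t,\cdot) = \mathcal{F}_{\R^n}^{-1}(g)(t,\cdot)$ for a.e.\ $t$. Finally, $\mathcal{F}_{\R^n}^{-1}(g)$ is smooth on $\G$ (differentiation under the integral, using the decay of $g$) and lies in $\mathcal{S}(\G)$ by Corollary \ref{corofourierschwartz} (since $\mathcal{F}_{\R^n}$ sends it to a multiple of $g\in\mathcal{S}(\G)$); in particular it is continuous, and so is $f$, so the almost-everywhere identity $f = \mathcal{F}_{\R^n}^{-1}(g)$ is an identity everywhere, whence $f\in\mathcal{S}(\G)$.

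The computations in the necessity direction and the absorption of the weight are routine; the genuinely delicate point is sufficiency, because the hypothesis $f\in C^\infty(\G)\cap L^1(\G)$ does not by itself force $\mathcal{F}_{\R^n}(f)$ to be smooth, let alone Schwartz, so one cannot simply apply Proposition \ref{proppartialdecayrn} to $\mathcal{F}_{\R^n}(f)$. This is what forces the detour of manufacturing the candidate $g$ from the coefficients $\ftil(k,\cdot)$, identifying it with $\mathcal{F}_{\R^n}(f)$ by uniqueness of Fourier coefficients, and upgrading the resulting a.e.\ identity to a genuine one through the continuity of $f$. A self-contained alternative avoiding these identifications is to imitate the proof of Proposition \ref{propdecaypartialxi}: apply $\mathcal{F}_{\T^m}\mathcal{F}_{\R^n}$ to $(1-\Delta_t)^{N/2}(1-\Delta_x)^{N/2}[(-ix)^\beta f]$, note that this equals $(1+|k|^2)^{N/2}(1+|\xi|^2)^{N/2}\partial_\xi^\beta\ftil(k,\xi)$, and bound the right-hand side using $|\ftil(k,\xi)| \le (2\pi)^{-m}\|\cdot\|_{L^1(\G)}$ from Definition \ref{defipartialfourier} together with the integrability of $(1+|x|^2)^{-(n+1)/2}$ to control the $x$-integral.
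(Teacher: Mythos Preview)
Your argument is correct. The approach is close in spirit to the paper's but organized differently, and the differences are worth noting.

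For necessity, the paper proceeds by a single direct computation: it applies $\mathcal{F}_{\T^m}\mathcal{F}_{\R^n}$ to $(1-\Delta_x)^{N/2}\bigl[(-ix)^\beta(1-\Delta_t)^{N/2}f\bigr]$ and bounds the resulting integral using the $L^1$ bound from Definition~\ref{defipartialfourier}. You instead compose the two characterizations already established (Corollary~\ref{corofourierschwartz} to pass from $f$ to $g=\mathcal{F}_{\R^n}(f)\in\mathcal{S}(\G)$, then Proposition~\ref{proppartialdecayrn} applied to $g$). Your ``self-contained alternative'' at the end is essentially the paper's route.

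For sufficiency, the paper and you perform the two inversions in opposite orders. The paper first integrates in $\xi$ to produce $h_0(k,x)=\frac{1}{(2\pi)^n}\int_{\R^n}\ftil(k,\xi)e^{ix\cdot\xi}\,d\xi$, then sums over $k$ to obtain a smooth $g(t,x)$, and identifies $g$ with $f$ via Fourier inversion first in $\xi$ and then in $t$. You instead sum over $k$ first to build $g(t,\xi)=\sum_k\ftil(k,\xi)e^{ik\cdot t}\in\mathcal{S}(\G)$ (using the converse of Proposition~\ref{proppartialdecayrn}), identify $g$ with $\mathcal{F}_{\R^n}(f)$ by uniqueness of Fourier coefficients on $\T^m$, and finish with Corollary~\ref{corofourierschwartz}. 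Your version leans more on the earlier propositions and makes the passage from the a.e.\ identity to a pointwise one explicit via the continuity of $f$; the paper's version is more self-contained but terser about that identification.
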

\begin{proof}
    Suppose first $f\in\mathcal{S}(\G)$. Notice that:
    \begin{align*}
            &\mathcal{F}_{\R^n}(\mathcal{F}_{\T^m}((1-\Delta_x)^{N/2}((-ix)^\beta(1-\Delta_t)^{N/2}f))))(k,\xi) =\\
            &=(1+|\xi|^2)^{N/2}(1+|k|^2)^{N/2}\partial_{\xi}^\beta\ftil(k,\xi).
    \end{align*}
    Therefore
    \begin{align*}
        &(1+|\xi|^2)^{N/2}(1+|k|^2)^{N/2}|\xi^\gamma\partial_{\xi}^\beta\ftil(k,\xi)|= \\
        &=\frac{1}{(2\pi)^m}\left|\int_{\R^n}\int_{\T^m}(1-\Delta_x)^{N/2}((-ix)^\beta(1-\Delta_t)^{N/2}f(t,x)e^{-i(k\cdot t+x\cdot\xi)}dtdx\right|\\
        &\leq\frac{1}{(2\pi)^m}\sup_{\substack{t\in\T^m\\x\in\R^n}}|(1+|x|^2)^{\frac{n+1}{2}}(1-\Delta_x)^{N/2}\partial_x^\gamma((-ix)^\beta(1-\Delta_t)^{N/2}f(t,x)|\\
        &\times\int_{\R^n}\int_{\T^m}(1+|x|^2)^{-\frac{n+1}{2}}dtdx<+\infty,
    \end{align*}
    for every $k\in\Z^m$, $\xi\in\R^n$,
    so inequality (\ref{ineqmix}) follows. Conversely, suppose (\ref{ineqmix}) holds. 
     This means that for each $\alpha\in\N_0^m$, $\beta,\gamma\in\N_0^n$ there exists $ C_{\alpha,\beta,\gamma}>0$ such that
    $$|k^\alpha\xi^\beta\partial_{\xi}^\gamma\ftil(k,\xi)|\leq C_{\alpha,\beta,\gamma}(1+|k|^2)^{-\frac{m+n+1}{2}}(1+|\xi|^2)^{-\frac{m+n+1}{2}},$$
    for every $k\in\Z^m$, $\xi\in\R^n$. Therefore, by Lebesgue's dominated convergence theorem, it follows that the function
    $$h_\alpha(k,x) = \frac{1}{(2\pi)^n}\int_{\R^n}(ik)^\alpha\ftil(k,\xi)e^{ix\cdot\xi}d\xi$$
    is smooth in $x\in\R^n$, for each $k\in\Z^m$. Moreover, $\exists K_\alpha>0$ such that
    $$|h_\alpha(k,x)|\leq K_\alpha(1+|k|^2)^{-\frac{m+n+1}{2}},$$
    for each $x\in\R^n.$ This means that the function
    $$g(t,x) = \sum_{k\in\Z^m}h_0(k,x)e^{ik\cdot t} = \sum_{k\in\Z^m}\frac{1}{(2\pi)^n}\int_{\R^n}\ftil(k,\xi)e^{ix\cdot\xi}d\xi e^{ik\cdot t}$$
    is smooth for $(t,x)\in\G$. Finally, we see point-wise in $k$ and then in $x$, that by the usual Fourier inversion theorem $\mathcal{F}_{\R^n}(h_0)(k,\xi) = \ftil(k,\xi)$ and $\mathcal{F}_{\T^m}(g)(k,x) = h_0(k,x)$, so that $f = g$.
\end{proof}
    We then extend the definition of the partial Fourier transforms to a more general setting, by considering the ``tempered" distributions of $\G$.
\begin{definition}
    Let $u\in \mathcal{S}'(\G)$. We define its partial Fourier transforms by
    $$\langle\mathcal{F}_{\R^n}(u)(t,\xi),f(t,\xi)\rangle = \langle u(t,\xi),\mathcal{F}_{\R^n}(f)(t,\xi)\rangle,$$
    for every $ \,f\in\mathcal{S}(\G)$, and
    $$\mathcal{F}_{\T^m}(u)(k,\cdot)\in \mathcal{S}'(\R^n),$$
   acting by 
   $$\langle\mathcal{F}_{\T^m}(u)(k,x),f(x)\rangle = \frac{1}{(2\pi)^m}\langle u(t,x),e^{-ik\cdot t}f(x)\rangle,$$
   for every $f\in\mathcal{S}(\R^n)$.
    We also define its mixed partial Fourier transform by
    $$\mathcal{F}_{\T^m}(\mathcal{F}_{\R^n}(u))(k,\cdot)\in \mathcal{S}'(\R^n),$$
    acting by
    $$\left\langle \mathcal{F}_{\T^m}(\mathcal{F}_{\R^n}(u))(k,\xi),f(\xi)\right\rangle = \frac{1}{(2\pi)^m}\langle u(t,\xi),e^{-ik\cdot t}\mathcal{F}_{\R^n}(f)(\xi)\rangle,$$
    for every $f\in\mathcal{S}(\R^n)$.
    For $u\in\mathcal{S}'(\G)$  we also define $$\langle\mathcal{F}_{\R^n}^{-1}u(t,x),f(t,x)\rangle = \langle u(t,x),\mathcal{F}_{\R^n}^{-1}f(t,x)\rangle.$$
    Notice that for $u\in\mathcal{S}'(\T^m\times\R^n)$ we have that $\mathcal{F}_{\R^n}u,\mathcal{F}_{\R^n}^{-1}u\in\mathcal{S}'(\G)$. We will also sometimes denote the partial Fourier transforms of $u\in\mathcal{S}'(\T^m\times\R^n)$ by $\widehat{u}$.
\end{definition}
Note that by Corollary \ref{corofourierschwartz} and the inclusion $C^\infty(\T^m)\otimes\mathcal{S}(\R^n)\subset\mathcal{S}(\G)$, these distributions are all well defined. Additionally, it is easy to see that this definition is consistent with Definition \ref{defipartialfourier}.
\begin{prop}\label{proppartialdecaytorusdistrib}
    Let $u\in \mathcal{S}'(\G)$. Then there exist $C,N>0$ such that
    \begin{equation}\label{ineqpartialtorusdistrib}
    |\langle \widehat{u}(k,x),f(x)\rangle|\leq C\Tilde{p}_N(f)(1+|k|^2)^{N/2},
    \end{equation}
    where $\Tilde{p}_N(f) = \sum_{|\alpha|+|\gamma|\leq N}\|x^\gamma\partial^\alpha f\|_\infty$, for every $f\in \mathcal{S}(\R^n)$, $k\in\Z^m$.
\end{prop}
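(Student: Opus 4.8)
\emph{Proof proposal.} The plan is to unwind the definition of the partial Fourier transform on the torus and reduce everything to the continuity estimate that $u$ satisfies as an element of $\mathcal{S}'(\G)$. By definition, $\langle \widehat{u}(k,x),f(x)\rangle = \frac{1}{(2\pi)^m}\langle u(t,x),e^{-ik\cdot t}f(x)\rangle$, so the first step is to observe that the function $g_k(t,x) \defeq e^{-ik\cdot t}f(x)$ genuinely belongs to $\mathcal{S}(\G)$, and hence the pairing makes sense: this is immediate from $f\in\mathcal{S}(\R^n)$, $e^{-ik\cdot t}\in C^\infty(\T^m)$, and the inclusion $C^\infty(\T^m)\otimes\mathcal{S}(\R^n)\subset\mathcal{S}(\G)$ recorded earlier. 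Since $u\in\mathcal{S}'(\G)$, there are constants $N_u,C_u>0$ with $|\langle u,g\rangle|\leq C_u p_{N_u}(g)$ for all $g\in\mathcal{S}(\G)$; applying this to $g_k$ gives $|\langle \widehat{u}(k,x),f(x)\rangle|\leq \frac{C_u}{(2\pi)^m}\,p_{N_u}(g_k)$.

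The second step is the explicit bound for $p_{N_u}(g_k)$. Since $\partial_t^\alpha(e^{-ik\cdot t}) = (-ik)^\alpha e^{-ik\cdot t}$, we have $|x^\gamma\partial_t^\alpha\partial_x^\beta g_k(t,x)| = |k^\alpha|\,|x^\gamma\partial_x^\beta f(x)| \leq |k|^{|\alpha|}\,\|x^\gamma\partial_x^\beta f\|_\infty$ for every $(t,x)\in\G$. Summing over $|\alpha|+|\beta|+|\gamma|\leq N_u$ and using $|k|^{|\alpha|}\leq (1+|k|)^{N_u}$ whenever $|\alpha|\leq N_u$, we obtain $p_{N_u}(g_k)\leq (1+|k|)^{N_u}\sum_{|\beta|+|\gamma|\leq N_u}\|x^\gamma\partial_x^\beta f\|_\infty = (1+|k|)^{N_u}\,\tilde{p}_{N_u}(f)$.

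The last step is cosmetic: from $(1+|k|)^2\leq 2(1+|k|^2)$ we get $(1+|k|)^{N_u}\leq 2^{N_u/2}(1+|k|^2)^{N_u/2}$, so taking $N\defeq N_u$ and $C\defeq 2^{N_u/2}C_u/(2\pi)^m$ yields the claimed inequality, with $N$ and $C$ depending only on $u$. There is no serious obstacle here; the only points deserving mild care are the multi-index bookkeeping in passing from the factor $|k^\alpha|$ to the single polynomial weight $(1+|k|^2)^{N/2}$, and checking that the reindexed seminorm one is left with over $(\beta,\gamma)$ is exactly $\tilde p_{N}(f)$ as defined in the statement.
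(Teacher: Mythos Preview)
Your proof is correct and follows essentially the same route as the paper's: apply the continuity estimate for $u\in\mathcal{S}'(\G)$ to the test function $e^{-ik\cdot t}f(x)$ and then bound $p_N$ of this tensor product by $(1+|k|^2)^{N/2}\tilde p_N(f)$. The only nitpick is that in passing from $\sum_{|\alpha|+|\beta|+|\gamma|\le N_u}|k^\alpha|\,\|x^\gamma\partial_x^\beta f\|_\infty$ to $(1+|k|)^{N_u}\tilde p_{N_u}(f)$ you silently drop a combinatorial constant counting the multi-indices $\alpha$, but this is harmless and gets absorbed into $C$ exactly as in the paper's $C_N$.
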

\begin{proof}
Let $f\in\mathcal{S}(\R^n)$. Then from definition, there exist $C',N>0$ such that
    \begin{align*}
        |\langle \widehat{u}(k,x),f(x)\rangle| &= |\langle u,e^{ik\cdot t}f(x)\rangle|\leq C' p_N(e^{ik\cdot t}f(x))\\
        &\leq C'C_N(1+|k|^2)^{N/2}\Tilde{p}_N(f),
    \end{align*}
    for some $C>0$ and for every $k\in\Z^m$.
\end{proof}
\begin{lemma}\label{lemmacauchys'}
    Let $(u_j)_{j\in\N}\subset\mathcal{S}'(\G)$ such that, for each $\theta\in\mathcal{S}(\G)$, $(\langle u_j,\theta\rangle)_{j\in\N}$ is a Cauchy sequence in $\mathbb{C}$. Then there exists a unique $u\in\mathcal{S}'(\G)$ such that $u=\lim_{j\to\infty}u_j$.
\end{lemma}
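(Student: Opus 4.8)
The plan is to recognise convergence in $\mathcal{S}'(\G)$ as weak-$*$ convergence and to prove the statement as the standard sequential completeness of the dual of a Fréchet space. First I would define, for each $\theta\in\mathcal{S}(\G)$,
$$\langle u,\theta\rangle\defeq\lim_{j\to\infty}\langle u_j,\theta\rangle,$$
which exists because $\mathbb{C}$ is complete and $(\langle u_j,\theta\rangle)_{j\in\N}$ is Cauchy by hypothesis. Linearity of $u$ is immediate from the linearity of each $u_j$ and of limits, so $u\colon\mathcal{S}(\G)\to\mathbb{C}$ is a linear functional; the only thing requiring argument is its continuity, i.e.\ the existence of $C,N>0$ with $|\langle u,\theta\rangle|\le Cp_N(\theta)$ for all $\theta\in\mathcal{S}(\G)$.

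Next I would obtain such a bound from equicontinuity of the family $\{u_j\}_{j\in\N}$. Since every Cauchy sequence in $\mathbb{C}$ is bounded, for each fixed $\theta$ we have $\sup_j|\langle u_j,\theta\rangle|<+\infty$, so $\{u_j\}$ is pointwise bounded. As $\mathcal{S}(\G)$ is a Fréchet space (in particular completely metrizable, hence a Baire space), the uniform boundedness principle applies: the sets $E_r\defeq\{\theta\in\mathcal{S}(\G):\sup_j|\langle u_j,\theta\rangle|\le r\}$, $r\in\N$, are closed and cover $\mathcal{S}(\G)$, so some $E_{r_0}$ has nonempty interior; a routine translation-and-rescaling argument using the defining seminorms $p_N$ then yields $C>0$ and $N\in\N_0$ with
$$|\langle u_j,\theta\rangle|\le Cp_N(\theta)\qquad\text{for all }j\in\N\text{ and all }\theta\in\mathcal{S}(\G).$$
(Equivalently, one simply invokes the Banach--Steinhaus theorem for Fréchet spaces.)

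Finally I would let $j\to\infty$: since $|\langle u,\theta\rangle|=\lim_j|\langle u_j,\theta\rangle|\le Cp_N(\theta)$, the functional $u$ lies in $\mathcal{S}'(\G)$, and by construction $\langle u_j,\theta\rangle\to\langle u,\theta\rangle$ for every $\theta\in\mathcal{S}(\G)$, i.e.\ $u_j\to u$ in $\mathcal{S}'(\G)$. Uniqueness is immediate: if $u'$ is another such limit then $\langle u-u',\theta\rangle=\lim_j\langle u_j,\theta\rangle-\lim_j\langle u_j,\theta\rangle=0$ for every $\theta$, so $u=u'$. The only genuinely non-formal point is the equicontinuity step, and it rests entirely on the Fréchet (hence Baire) property of $\mathcal{S}(\G)$ recorded after its definition; once that is in hand there is no real obstacle, and no explicit control of $C$ and $N$ in terms of the $u_j$ is needed.
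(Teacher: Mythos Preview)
Your proposal is correct and follows essentially the same route as the paper: define $u$ as the pointwise limit, use that Cauchy sequences are bounded to obtain pointwise boundedness of $\{u_j\}$, and then invoke the uniform boundedness principle on the Fr\'echet space $\mathcal{S}(\G)$ to get equicontinuity. The only cosmetic difference is that the paper verifies continuity via sequential continuity (showing $\phi_l\to 0$ implies $\langle u,\phi_l\rangle\to 0$ through an $\varepsilon/2$ argument), whereas you pass directly to the limit in the uniform seminorm bound $|\langle u_j,\theta\rangle|\le C p_N(\theta)$; both are equivalent here.
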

\begin{proof}
    The proof is similar to the proof in [Theorem 2.9 in \cite{DEAVILASILVA2022109418}]. Define
    $$\langle u ,\phi\rangle = \lim_{j\to\infty}\langle u_j,\phi\rangle\in\mathbb{C},$$
    for each $\phi\in\mathcal{S}(\G)$. Clearly,, the mapping $\phi\mapsto\langle u,\phi\rangle$ islinear; therefore, itonly remains to prove that it is continuous. Suppose $\phi_l\to0$ in $\s$ as $l\to\infty$. Since for each $\psi\in\s$, the sequence $(\langle u_j,\psi\rangle)_{j\in\N}$ is a Cauchy sequence in $\mathbb{C}$, it is bounded, say by $0<C_\psi<+\infty$. As $\s$ is a Fréchet space and $\mathbb{C}$ is a Banach space, $\{u_j\}_{j\in\N}$ is equicontinuous on the unit ball, by the uniform boundness principle. Therefore, there exist $K,N>0$ such that
    $$|\langle u_j,\psi\rangle|\leq K,$$
    for each $\psi\in\s$  satisfying $p_N(\psi)\leq 1$. Let $\epsilon>0$ and set 
    $$\psi_l=\frac{2K}{\epsilon}\phi_l,\,l\in\N.$$
    Then $\psi_l\to0$ in $\s$ as well, and so $\exists l_0\in\N$ such that $p_N(\psi_l)\leq 1$ for all $l\geq l_0$. Therefore, for each $l\geq l_0$ we have that 
    $$|\langle u_j,\phi_l\rangle|\leq\frac{\epsilon}{2},$$
    for each $j\in\N$. Also, since $\langle u,\phi_l\rangle = \lim_{j\to\infty}\langle u_j,\phi_l\rangle$, for each $l\geq l_0$, there exists $j_l\in\N$ such that
    $$|\langle u,\phi_l\rangle-\langle u_{j_l},\phi_l\rangle|<\frac{\epsilon}{2}.$$
    Therefore, we conclude that for $l\geq l_0$ we have that
    $$|\langle u,\phi_l\rangle| \leq |\langle u,\phi_l\rangle-\langle u_{j_l},\phi_l\rangle| + |\langle u_{j_l},\phi_l\rangle|< \epsilon,$$
    so that $\langle u,\phi_l\rangle\to 0$ and $u\in\mathcal{S}'(\G)$.
\end{proof}
\begin{prop}\label{proppartialdecaydistribinv}
    Let $\{{u}(k,\cdot)\in \mathcal{S}'(\R^n)\}_{k\in\Z^m}$ be a sequence of tempered distributions which satisfy inequalities (\ref{ineqpartialtorusdistrib}). Then there exists a unique $\widecheck{u}\in \mathcal{S}'(\G)$ such that
    $$\langle \widecheck{u} , f\rangle = (2\pi)^m\sum_{k\in\Z^m}\langle {u}(k,x),\widehat{f}(-k,x)\rangle=\sum_{k\in\Z^m}\langle{u}(k,x)e^{ik\cdot t},f(t,x)\rangle,$$
    so that we define
    $\mathcal{F}^{-1}_{\T^m}({u}) \defeq \widecheck{{u}}.$
\end{prop}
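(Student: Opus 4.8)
The plan is to obtain $\widecheck u$ as the limit in $\mathcal{S}'(\G)$ of the symmetric partial sums
$$u_M\defeq\sum_{|k|\le M}u(k,\cdot)\,e^{ik\cdot t},\qquad M\in\N_0,$$
where each summand $u(k,\cdot)e^{ik\cdot t}$ is by definition the functional $f\mapsto(2\pi)^m\langle u(k,\cdot),\widehat f(-k,\cdot)\rangle$ on $\mathcal{S}(\G)$. This makes sense: for $f\in\mathcal{S}(\G)$, inequality (\ref{ineqpartialtorus}) of Proposition \ref{proppartialdecayrn} gives, for each fixed $k$, the bound $|\partial_x^\beta\widehat f(-k,x)|\le C_{N,\beta}(1+|x|^2)^{-N/2}$ for every $N$ and $\beta$, so $\widehat f(-k,\cdot)\in\mathcal{S}(\R^n)$ and the pairing with $u(k,\cdot)\in\mathcal{S}'(\R^n)$ is well defined. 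The first real step is to extract from the proof of Proposition \ref{proppartialdecayrn} that the constant $C_{N,\beta}$ there may be taken to be a fixed continuous seminorm of $f$ on $\mathcal{S}(\G)$; combining this with the elementary inequality $|x^\gamma|(1+|x|^2)^{-M/2}\le1$ valid for $|\gamma|\le M$, I would record that for every $M\in\N_0$ there are $C_M>0$ and an index $N'=N'(M,N)$ with
$$\widetilde p_N\bigl(\widehat f(-k,\cdot)\bigr)\le C_M\,(1+|k|^2)^{-M/2}\,p_{N'}(f),\qquad k\in\Z^m,\ f\in\mathcal{S}(\G),$$
where $N$ and $C$ are the data furnished by the hypothesis (\ref{ineqpartialtorusdistrib}) for the given sequence $\{u(k,\cdot)\}$.

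Next I would feed this into (\ref{ineqpartialtorusdistrib}): for each $f$ and each $k$,
$$\bigl|\langle u(k,\cdot),\widehat f(-k,\cdot)\rangle\bigr|\le C\,\widetilde p_N\bigl(\widehat f(-k,\cdot)\bigr)\,(1+|k|^2)^{N/2}\le C\,C_M\,p_{N'}(f)\,(1+|k|^2)^{(N-M)/2}.$$
Choosing $M\defeq N+m+1$ makes $\sum_{k\in\Z^m}(1+|k|^2)^{(N-M)/2}$ convergent, so $\sum_{k}\langle u(k,\cdot),\widehat f(-k,\cdot)\rangle$ converges absolutely for every $f\in\mathcal{S}(\G)$, with tails controlled uniformly by $p_{N'}(f)$; in particular $(\langle u_M,f\rangle)_M$ is Cauchy in $\mathbb{C}$ for each $f$. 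By Lemma \ref{lemmacauchys'} there is a unique $\widecheck u\in\mathcal{S}'(\G)$ with $\langle\widecheck u,f\rangle=\lim_M\langle u_M,f\rangle=(2\pi)^m\sum_{k\in\Z^m}\langle u(k,\cdot),\widehat f(-k,\cdot)\rangle$, which is the asserted formula. The equality of the two displayed expressions in the statement is just the definition of $u(k,\cdot)e^{ik\cdot t}$ together with $\int_{[0,2\pi]^m}f(t,\cdot)e^{ik\cdot t}\,dt=(2\pi)^m\widehat f(-k,\cdot)$. Uniqueness of $\widecheck u$ among functionals in $\mathcal{S}'(\G)$ satisfying this formula is then immediate, since the formula prescribes $\langle\widecheck u,f\rangle$ for all $f\in\mathcal{S}(\G)$; alternatively it is the uniqueness clause of Lemma \ref{lemmacauchys'}.

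The only genuine work is the seminorm bookkeeping in the first step: one must upgrade Proposition \ref{proppartialdecayrn} to a \emph{uniform} statement, namely that the Schwartz seminorms of the partial coefficients $\widehat f(-k,\cdot)$ decay in $k$ at an arbitrarily prescribed polynomial rate while being dominated by a \emph{single} seminorm $p_{N'}(f)$ of $f$ on $\mathcal{S}(\G)$. Once that uniformity is isolated, the remainder is the summability computation above and an appeal to the already-proved Lemma \ref{lemmacauchys'}. A couple of minor points deserve care: the index $N$ and constant $C$ in (\ref{ineqpartialtorusdistrib}) are fixed by the sequence $\{u(k,\cdot)\}$ first, and $M$, $N'$, $C_M$ are chosen afterwards in terms of that $N$, so there is no circularity; and the limit $\widecheck u$ is independent of the chosen exhaustion of $\Z^m$, since the defining series converges absolutely.
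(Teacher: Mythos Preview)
Your proposal is correct and follows essentially the same route as the paper: define finite partial sums indexed by $|k|\le M$, combine the hypothesis (\ref{ineqpartialtorusdistrib}) with the decay estimate for $\widetilde p_N(\widehat f(-k,\cdot))$ coming from the proof of Proposition \ref{proppartialdecayrn}, conclude the partial sums are Cauchy, and invoke Lemma \ref{lemmacauchys'}. The only cosmetic difference is that you track the dependence on a seminorm $p_{N'}(f)$ explicitly, whereas the paper is content with a constant $C_f$ depending on $f$ and lets Lemma \ref{lemmacauchys'} supply continuity.
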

\begin{proof}
For each $j\in\N$, define $S_j\in \mathcal{S}'(\G)$ by
\begin{align*}
\langle S_j(t,x),f(t,x)\rangle\ &= \sum_{|k|\leq j}\langle \widehat u(k,x)e^{ik\cdot t},f(t,x)\rangle = \sum_{|k|\leq j}\int_{\T^m}\langle \widehat u(k,x),f(t,x)\rangle e^{ik\cdot t}dt  \\
&=  (2\pi)^m\sum_{|k|\leq j}\langle \widehat u(k,x),\widehat{f}(-k,x)\rangle,
\end{align*}
for every $f\in\s$.
Note that for each $k\in\Z^m$, inequality (\ref{ineqpartialtorusdistrib}) implies that 
\begin{align*}
    |\langle {u}(k,x),\widehat{f}(-k,x)\rangle|&\leq \Tilde{C}\Tilde{p}_{N}\left(\widehat{f}(-k,\cdot)\right)(1+|k|^2)^{N/2}
\end{align*}
for some $\Tilde{C}$. 
Therefore, for every $j,l\in\N$ we have that
\begin{align*}
    |\langle S_{j+l},f \rangle - \langle S_{j},f\rangle|&\leq \sum_{|k|>j}^{j+l} |\langle {u}(k,x),\widehat{f}(-k,x)\rangle|\leq \sum_{|k|>j}^{j+l} \Tilde{C}\Tilde{p}_{N}\left(\widehat{f}(-k,\cdot)\right)(1+|k|^2)^{N/2}\\
    &\leq\sum_{|k|>j}^{\infty} \Tilde{C}\Tilde{p}_{N}\left(\widehat{f}(-k,\cdot)\right)(1+|k|^2)^{N/2}.
\end{align*}
As $f\in\s$, there exists $C_f>0$ such that
$$\Tilde{p}_{N}\left(\widehat{f}(-k,\cdot)\right)\leq C_f(1+|k|^2)^{-\frac{N+m+1}{2}},$$
for every $k\in\Z^m$, by the same argument used in the proof of Proposition \ref{proppartialdecayrn}. This implies 
$$|\langle S_{j+l},f \rangle - \langle S_{j},f\rangle|\leq \Tilde{C}C_f\left(\sum_{\substack{|k|>j\\k\in\Z^m}}^{\infty}(1+|k|^2)^{-\frac{m+1}{2}}\right)\to 0$$
as $j\to+\infty$, as the series above is convergent for each $j\in\N$. This means that the sequence $(\langle S_j,f\rangle)_{j\in\N}$ is a Cauchy sequence in $\mathbb{C}$, for each $f\in\s$, so that it follows from Lemma \ref{lemmacauchys'} that 
\begin{align*}
    \widecheck{u} = (2\pi)^m\sum_{k\in\Z^m}{u}(k,x)e^{ik\cdot t}=\lim_{j\to\infty}S_j\in\mathcal{S}'(\G).
\end{align*}
\end{proof}
Note using both Propositions \ref{proppartialdecaytorusdistrib} and \ref{proppartialdecaydistribinv}, for each $u\in \mathcal{S}'(\G)$ we have that
$$\mathcal{F}^{-1}_{\T^m}(\mathcal{F}^{}_{\T^m}(u)) = u.$$
\begin{prop}\label{proppartialdecayrndistribinv}
     Let $u:\G\to\mathbb{C}$ be such that there exist $C,N>0$ such that
\begin{equation}\label{ineqpartialrndistrib}
    |{u}(t,\xi)|\leq C(1+|\xi|^2)^{N/2}, 
    \end{equation}
     for each $(t,\xi)\in\G$. Then there exists a unique $\widecheck{u}\in\mathcal{S}'(\G)$ given by
     \begin{align*}
         \langle \widecheck{u},f\rangle &=\frac{1}{(2\pi)^n}\int_{\T^m} \int_{\R^n}\int_{\R^n}u(t,\xi)e^{ix\cdot\xi}f(t,x)d\xi dxdt\\
         &=\frac{1}{(2\pi)^n}\int_{\T^m} \int_{\R^n} u(t,\xi)\widehat{f}(t,-\xi)d\xi dt,
      \end{align*}
      for each $f\in \mathcal{S}(\G)$ and we define $\mathcal{F}^{-1}_{\R^n}({u}) \defeq \widecheck{u}.$
\end{prop}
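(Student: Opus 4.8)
The plan is to mirror the proof structure of Proposition \ref{proppartialdecaydistribinv}, but working in the Euclidean variables rather than the torus variables. First I would verify that the two displayed formulas for $\langle\widecheck u,f\rangle$ agree: this is just Fubini's theorem applied to the absolutely convergent integral, using the growth bound (\ref{ineqpartialrndistrib}) together with the rapid decay of $\widehat f(t,\cdot)$ guaranteed by Proposition \ref{propdecaypartialxi} (since $f\in\mathcal{S}(\G)$). In particular, for each fixed $t$ the inner integral $\int_{\R^n}u(t,\xi)\widehat f(t,-\xi)\,d\xi$ converges absolutely, and then one integrates over the compact torus $\T^m$ without issue.

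Next I would show that the functional $f\mapsto\langle\widecheck u,f\rangle$ is well defined and continuous on $\mathcal{S}(\G)$, i.e. lies in $\mathcal{S}'(\G)$. From (\ref{ineqpartialrndistrib}) we bound
\[
|\langle\widecheck u,f\rangle|\leq \frac{C}{(2\pi)^n}\int_{\T^m}\int_{\R^n}(1+|\xi|^2)^{N/2}|\widehat f(t,-\xi)|\,d\xi\,dt.
\]
Writing $(1+|\xi|^2)^{N/2}|\widehat f(t,-\xi)| = (1+|\xi|^2)^{-(n+1)/2}\cdot(1+|\xi|^2)^{(N+n+1)/2}|\widehat f(t,-\xi)|$, the second factor is bounded by a seminorm of $\widehat f$ of the form appearing in (\ref{ineqpartialr}) with $\alpha=\beta=0$, $N$ replaced by $N+n+1$; by the estimates in the proof of Proposition \ref{propdecaypartialxi} this seminorm of $\widehat f$ is in turn controlled by $p_{M}(f)$ for a suitable $M=M(N,n)$. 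The remaining factor $(1+|\xi|^2)^{-(n+1)/2}$ is integrable on $\R^n$, and the $t$-integral is over a set of finite measure, so we get $|\langle\widecheck u,f\rangle|\leq C' p_M(f)$, proving continuity. This establishes existence of $\widecheck u\in\mathcal{S}'(\G)$; uniqueness is immediate since the two displayed formulas determine the value of $\widecheck u$ on every $f\in\mathcal{S}(\G)$, and $\mathcal{S}(\G)$ is the domain.

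Finally, to justify the notation $\mathcal{F}^{-1}_{\R^n}(u)\defeq\widecheck u$ and its consistency with the distributional definition already given, I would check that when $u$ itself is viewed as the element of $\mathcal{S}'(\G)$ induced by the growth bound (\ref{ineqpartialrndistrib}) (via the first Remark after the definition of $\mathcal{S}'(\G)$), the formula for $\langle\widecheck u,f\rangle$ coincides with $\langle u,\mathcal{F}^{-1}_{\R^n}f\rangle$; this is again just Fubini, moving the $e^{ix\cdot\xi}$ kernel from $f$ onto $u$. The main obstacle, such as it is, is the bookkeeping in the continuity estimate: one must carefully extract enough powers of $(1+|\xi|^2)$ to both absorb the polynomial growth $N$ of $u$ and gain an integrable tail, and then trace through the proof of Proposition \ref{propdecaypartialxi} to see which seminorm $p_M(f)$ of the original $f$ controls the relevant seminorm of $\widehat f$ — but this is entirely routine given the propositions already proved, and no genuinely new idea is needed.
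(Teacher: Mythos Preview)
Your proposal is correct and follows essentially the same route as the paper: both arguments insert the weight $(1+|\xi|^2)^{-(n+1)/2}\cdot(1+|\xi|^2)^{(N+n+1)/2}$, bound the second factor by a seminorm of $\widehat f$ coming from Proposition~\ref{propdecaypartialxi}, and integrate the first factor. The only cosmetic difference is that the paper phrases continuity sequentially (showing $f_j\to 0$ in $\mathcal{S}(\G)$ forces $\langle\widecheck u,f_j\rangle\to 0$) rather than via your direct bound $|\langle\widecheck u,f\rangle|\leq C'p_M(f)$, but the underlying estimate is identical.
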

\begin{proof}
 Let $u$ be as stated. Then inequality (\ref{ineqpartialrndistrib}) implies the integrals above are well defined and finite, so it is clear that the map $f\mapsto\langle\widecheck{u},f\rangle$ is linear. It only remains to prove that it is continuous. Hence, let $f_j\to 0$ in $\s$. Then $p_M(f_j)\to 0$ for each $M>0$. But then Proposition \ref{proppartialdecayrn} and its proof imply that $p_M(\widehat{f_j})\to 0$ for each $M>0$. This means that
 \begin{align*}
     |\langle \widecheck{u},f_j\rangle|&\leq \frac{1}{(2\pi)^n}\int_{\T^m}\int_{\R^n}|u(t,\xi)|(1+|\xi|^2)^{-\frac{N+n+1}{2}}(1+|\xi|^2)^{\frac{N+n+1}{2}}|\widehat{f_j}(t,-\xi)|d\xi dt\\
&\leq(2\pi)^{m-n}p_{N+n+1}\left(\widehat{f_j}\right)C\int_{\R^n}(1+|\xi|^2)^{-\frac{n+1}{2}}d\xi\to0
 \end{align*}
 as $j\to+\infty$, for some $C>0$. It follows that $\widecheck{u}\in\mathcal{S}'(\G)$.
\end{proof}
Notice that for $u$ as in the previous proposition, we have that
$$\mathcal{F}^{}_{\R^n}(\mathcal{F}^{-1}_{\R^n}(u)) = u$$
in the sense of distributions.
\begin{corollary}\label{corodistribmixinv}
     Let $\{{u}(k,\cdot):\R^n\to\mathbb{{C}}\}_{k\in\Z^m}$ be a sequence of functions such that $\exists N,C>0$, such that:
     \begin{equation}
         |u(k,\xi)|\leq C(1+|k|^2)^{N/2}(1+|\xi|^2)^{N/2}
     \end{equation}
     for all $(k,\xi)\in\Z^m\times\R^n$. Then $\mathcal{F}^{-1}_{\R^n}(\mathcal{F}^{-1}_{\T^m}(u))$ is well defined and belongs to $\mathcal{S}'(\G)$. Furthermore, $\mathcal{F}^{}_{\T^m}(\mathcal{F}^{}_{\R^n}(\mathcal{F}^{-1}_{\R^n}(\mathcal{F}^{-1}_{\T^m}(u)))) = u$ in the sense of distributions.
\end{corollary}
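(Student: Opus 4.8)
The plan is to take the composite $\mathcal{F}_{\T^m}\circ\mathcal{F}_{\R^n}\circ\mathcal{F}^{-1}_{\R^n}\circ\mathcal{F}^{-1}_{\T^m}$ apart one factor at a time, so that everything reduces to inversion statements already proved. First I would check that the hypothesis lets me invoke Proposition \ref{proppartialdecaydistribinv}: each $u(k,\cdot)$, being of polynomial growth, induces a tempered distribution on $\R^n$ through $f\mapsto\int_{\R^n}u(k,x)f(x)\,dx$, and for $f\in\mathcal{S}(\R^n)$,
$$|\langle u(k,x),f(x)\rangle|\le C(1+|k|^2)^{N/2}\int_{\R^n}(1+|x|^2)^{N/2}|f(x)|\,dx\le C'(1+|k|^2)^{N/2}\tilde{p}_{N+n+1}(f),$$
since $\int_{\R^n}(1+|x|^2)^{-(n+1)/2}\,dx<+\infty$ (the same estimate as in the proof of Proposition \ref{proppartialdecayrn}). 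Hence the sequence $\{u(k,\cdot)\}$ satisfies inequalities (\ref{ineqpartialtorusdistrib}) with $N$ replaced by $N+n+1$, so $v:=\mathcal{F}^{-1}_{\T^m}(u)\in\mathcal{S}'(\G)$ is well defined by Proposition \ref{proppartialdecaydistribinv}. Since $\mathcal{F}^{-1}_{\R^n}$ carries $\mathcal{S}(\G)$ into $\mathcal{S}(\G)$ (Corollary \ref{corofourierschwartz}) it acts on $\mathcal{S}'(\G)$ by duality, so $w:=\mathcal{F}^{-1}_{\R^n}(v)\in\mathcal{S}'(\G)$; this proves the first assertion.

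For the identity I would argue in two steps. \emph{Step 1: $\mathcal{F}_{\R^n}(w)=v$.} For every $f\in\mathcal{S}(\G)$,
$$\langle\mathcal{F}_{\R^n}(w),f\rangle=\langle w,\mathcal{F}_{\R^n}(f)\rangle=\langle v,\mathcal{F}^{-1}_{\R^n}(\mathcal{F}_{\R^n}(f))\rangle=\langle v,f\rangle,$$
the last equality being the Schwartz-level identity $\mathcal{F}^{-1}_{\R^n}\circ\mathcal{F}_{\R^n}=\mathrm{id}$ on $\mathcal{S}(\G)$ recorded after Proposition \ref{propdecaypartialxi}. \emph{Step 2: $\mathcal{F}_{\T^m}(v)=u$.} Fix $k_0\in\Z^m$ and $g\in\mathcal{S}(\R^n)$; using the definition of $\mathcal{F}_{\T^m}$ on $\mathcal{S}'(\G)$ together with the series representation of $v=\mathcal{F}^{-1}_{\T^m}(u)$ from Proposition \ref{proppartialdecaydistribinv},
\begin{align*}
\langle\mathcal{F}_{\T^m}(v)(k_0,x),g(x)\rangle&=\frac{1}{(2\pi)^m}\sum_{k\in\Z^m}\langle u(k,x)e^{ik\cdot t},e^{-ik_0\cdot t}g(x)\rangle\\
&=\frac{1}{(2\pi)^m}\sum_{k\in\Z^m}\langle u(k,x),g(x)\rangle\int_{\T^m}e^{i(k-k_0)\cdot t}\,dt=\langle u(k_0,x),g(x)\rangle,
\end{align*}
since $\int_{\T^m}e^{i(k-k_0)\cdot t}\,dt=(2\pi)^m\delta_{k,k_0}$. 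Thus $\mathcal{F}_{\T^m}(v)=u$, and combining the two steps gives $\mathcal{F}_{\T^m}(\mathcal{F}_{\R^n}(w))=u$, which is the claim.

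The only places where a little care is needed — and where I would expect to spend whatever real (if modest) effort this proof requires — are two bookkeeping points. The first is the interchange of the infinite sum over $k$ with the distributional pairing in Step 2: this is legitimate because in Proposition \ref{proppartialdecaydistribinv} the object $\mathcal{F}^{-1}_{\T^m}(u)$ is defined as the limit in $\mathcal{S}'(\G)$ of the partial sums $S_j$, so that $\langle v,f\rangle$ is an honestly convergent numerical series in $k$ for each fixed test function and the rearrangement above is valid termwise. The second is keeping the normalization constants consistent: one must check that the $(2\pi)^m$ built into the torus inversion and the $(2\pi)^{-n}$ built into the Euclidean inversion cancel exactly against the constants in the definitions of $\mathcal{F}_{\T^m}$ and $\mathcal{F}_{\R^n}$, so that the composite is precisely the identity and not a nonzero scalar multiple of it. Beyond these, no analytic difficulty arises that is not already contained in Propositions \ref{proppartialdecaytorusdistrib}, \ref{proppartialdecaydistribinv}, \ref{proppartialdecayrndistribinv} and Corollary \ref{corofourierschwartz}.
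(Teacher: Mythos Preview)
Your argument is correct and follows essentially the same route as the paper: both verify that the polynomial growth bound on $u(k,\xi)$ yields the distributional estimate \eqref{ineqpartialtorusdistrib}, then invoke Proposition~\ref{proppartialdecaydistribinv} together with an $\R^n$-inversion. The only cosmetic difference is the order in which the two inverse transforms are applied: the paper first applies $\mathcal{F}^{-1}_{\R^n}$ to each $u(k,\cdot)$ via Proposition~\ref{proppartialdecayrndistribinv} and checks that the resulting sequence of tempered distributions satisfies \eqref{ineqpartialtorusdistrib}, whereas you first treat $u(k,\cdot)$ itself as a tempered distribution, build $v=\mathcal{F}^{-1}_{\T^m}(u)$, and only then take $\mathcal{F}^{-1}_{\R^n}$ by duality using Corollary~\ref{corofourierschwartz}. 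Your order matches the composition written in the statement, and your explicit verification of $\mathcal{F}_{\T^m}(v)=u$ via orthogonality of the characters fills in the inversion identity that the paper leaves implicit.
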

\begin{proof}
    Note that if $u$ is as stated, then there exist $C',N'>0$ such that: 
    $$\left|\frac{1}{(2\pi)^n}\int_{\T^m} \int_{\R^n} u(k,\xi)\widehat{f}(-\xi)d\xi dt\right|\leq C'\Tilde{p}_{N'}(f)(1+|k|^2)^{N'/2}$$
    for each $f\in\mathcal{S}(\R^n)$, so that the claim follows from  Propositions \ref{proppartialdecaydistribinv} and \ref{proppartialdecayrndistribinv}.
\end{proof}

It is worth noting that as in $\mathcal{S}(\R^n)$, for $u\in\mathcal{S}'(\G)$, we may define as usual $gu$ and $\partial_t^{\alpha}\partial_x^{\beta}u$, where $g:\T^m\times\R^n\to\mathbb{C}$ is a suitable function (as with polynomial growth in $\R^n$, uniform in $t\in\T^m$, along with all of its derivatives) and $\alpha\in\N^m_0,\,\beta\in\N^n_0$.  It is also easy to see that in $\mathcal{S}'(\g)$ the following equalities also hold
\begin{align*}
    \mathcal{F}_{\T^m}(\partial_t^\alpha u)(k,x) = (ik)^\alpha \widehat{u}(k,x),\\
    \mathcal{F}_{\R^n}(\partial_x^\beta u)(k,x) = (i\xi)^\beta \widehat{u}(t,\xi),\\
    \mathcal{F}_{\R^n}((-ix)^\gamma u)(t,\xi) =  \partial_{\xi}^\gamma\widehat{u}(t,\xi).
\end{align*}
\section{Application on regularity of Differential Operators}
The previous results allow the study of regularity differential operators on $\G$ as follows.
\begin{definition}
    Let $L:\mathcal{S}'(\G)\to\mathcal{S}'(\G)$ be a differential operator. We say $L$ is Schwartz globally hypoelliptic (SGH) if whenever $Lu=f\in\s$ for some $u\in\mathcal{S}'(\G)$, this implies $u\in\s$.
\end{definition}
As an example, consider the differential operator $L=\partial_t+\partial_x$ on $\T^1\times\R$. Then clearly $L$ is not SGH since $v(t,x)\equiv1\in\mathcal{S}'(\G)\backslash\s$ satisifies $Lv=0\in\s$.
\subsection{Constant Coefficient case}
The next theorem provides necessary and sufficient conditions for a class of constant coefficient differential operator acting on on $\T^1\times\R$ to be SGH.

\begin{theorem}
    Let $L$ be a constant coefficient differential operator on $\T^1\times\R$, given by
    \begin{equation}
        L=p(\partial_x)+q(\partial_t),
    \end{equation}
    where $p$ and $q$ are non-constant polynomials in one variable, and $p$ or $q$ have real coefficients. Then $L$ is SGH if and only if 
    \begin{equation}\label{setZ}
        Z=\left\{p(\xi)+q(k)=0|(k,\xi)\in\Z\times\R\right\}=\emptyset.
    \end{equation}
\end{theorem}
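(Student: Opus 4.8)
The plan is to work entirely on the mixed Fourier transform side, using the characterization of $\mathcal{S}(\g)$ from Proposition \ref{proppartialdecaymix} and of $\mathcal{S}'(\g)$ from Propositions \ref{proppartialdecaytorusdistrib}--\ref{proppartialdecaydistribinv} and Corollary \ref{corodistribmixinv}. The key observation is that conjugating $L$ by the mixed transform turns it into the multiplication operator $\ftil(k,\xi)\mapsto \big(p(i\xi)+q(ik)\big)\ftil(k,\xi)$ on sequences of functions; so $Lu=f$ becomes $\big(p(i\xi)+q(ik)\big)\widetilde{\widehat u}(k,\xi)=\ftil(k,\xi)$. Write $\sigma(k,\xi) := p(i\xi)+q(ik)$ for the full symbol. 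Note the hypothesis that $p$ or $q$ has real coefficients is what ties the abstract symbol $\sigma$ to the set $Z$ in \eqref{setZ}: I would first record the elementary fact that, under this hypothesis, $\sigma(k,\xi)=0$ for some $(k,\xi)\in\Z\times\R$ if and only if $Z\neq\emptyset$ (the real/imaginary parts of $p(i\xi)+q(ik)$ must both vanish, and one of them is, up to the real-coefficient assumption, essentially $\Real p(i\xi)+\Real q(ik)$ or an analogous combination matching the definition of $Z$). I expect to have to be slightly careful here about exactly which combination of $p,q$ produces the polynomial "$p(\xi)+q(k)$" appearing in \eqref{setZ} versus the operator symbol $p(i\xi)+q(ik)$; reconciling these conventions is a bookkeeping point but must be done cleanly.

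For the ``if'' direction, assume $Z=\emptyset$, so $\sigma(k,\xi)\neq 0$ everywhere. Suppose $Lu=f\in\mathcal{S}(\g)$ with $u\in\mathcal{S}'(\g)$. On the transform side $\widetilde{\widehat u}(k,\xi)=\sigma(k,\xi)^{-1}\ftil(k,\xi)$. Since $f\in\mathcal{S}(\g)$, its mixed transform enjoys the rapid decay \eqref{ineqmix}; I must show that dividing by $\sigma$ preserves this, which reduces to a lower bound of the form $|\sigma(k,\xi)|\geq c(1+|k|^2+|\xi|^2)^{-M}$ — in fact, since $p,q$ are polynomials and $\sigma$ is continuous and nonvanishing on $\Z\times\R$ with $|\sigma(k,\xi)|\to\infty$ as $|(k,\xi)|\to\infty$ (because $\deg p,\deg q\geq 1$), one gets a uniform \emph{positive} lower bound $|\sigma(k,\xi)|\geq c>0$; this is the crucial quantitative step. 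Then $1/\sigma$ and all its $\xi$-derivatives have at most polynomial growth (by the quotient rule together with the growth of derivatives of $p$ and this lower bound), so $\widetilde{\widehat u}$ still satisfies \eqref{ineqmix}, whence $u\in\mathcal{S}(\g)$ by Proposition \ref{proppartialdecaymix}. One must also check at the outset that $u$ actually \emph{has} a well-defined mixed transform and that the operator identity $\widetilde{\widehat{Lu}}=\sigma\,\widetilde{\widehat u}$ holds in the distributional sense — this follows from the displayed symbol identities at the end of Section~2 and Corollary \ref{corodistribmixinv}.

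For the ``only if'' direction, assume $Z\neq\emptyset$, i.e. there is $(k_0,\xi_0)\in\Z\times\R$ with $\sigma(k_0,\xi_0)=0$ (after the convention-matching above). The goal is to produce $u\in\mathcal{S}'(\g)\setminus\mathcal{S}(\g)$ with $Lu\in\mathcal{S}(\g)$; the natural candidate is a distribution supported, on the transform side, at the single frequency $k=k_0$ and concentrated near $\xi=\xi_0$. Concretely I would try $u(t,x)=e^{ik_0 t}v(x)$ where $\widehat v$ is chosen so that $\big(p(i\xi)+q(ik_0)\big)\widehat v(\xi)$ is Schwartz while $v$ itself is not Schwartz: e.g. take $v$ a tempered distribution whose Fourier transform is $1/(p(i\xi)+q(ik_0))$ away from the zero set, suitably interpreted, or more simply exploit that $p(i\xi)+q(ik_0)$ is a nonzero polynomial in $\xi$ vanishing at $\xi_0$, so that the constant-coefficient ODE $\big(p(\partial_x)+q(ik_0)\big)v=0$ on $\R$ has a nonzero polynomially-bounded (indeed bounded, oscillatory $e^{i\xi_0 x}$-type) solution $v\in\mathcal{S}'(\R)\setminus\mathcal{S}(\R)$; then $Lu=0\in\mathcal{S}(\g)$ but $u\notin\mathcal{S}(\g)$. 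I expect the main obstacle to be this direction: verifying that such a $v$ genuinely lies in $\mathcal{S}'(\R)$ (polynomial bound on real part of roots — here one needs that $p(\partial_x)+q(ik_0)$ has at least one root $i\xi_0$ on the imaginary axis, guaranteed by $\sigma(k_0,\xi_0)=0$) and that $u=e^{ik_0 t}v(x)$ is a bona fide element of $\mathcal{S}'(\g)$ but fails the uniform-in-$t$ Schwartz bounds of Definition of $\mathcal{S}(\g)$. Once the counterexample is in hand the theorem follows.
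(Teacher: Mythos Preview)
Your ``only if'' direction is essentially the paper's: the counterexample $u(t,x)=e^{i(k_0t+\xi_0x)}$ is exactly what the paper uses, and your discussion of solving the ODE $\big(p(\partial_x)+q(ik_0)\big)v=0$ just unwinds this.

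The ``if'' direction, however, has a genuine gap. You assert that $|\sigma(k,\xi)|\to\infty$ as $|(k,\xi)|\to\infty$ ``because $\deg p,\deg q\geq 1$'', and deduce the uniform lower bound $|\sigma|\geq c>0$ from this plus continuity. The growth claim is false. Take $p(\xi)=\xi+i$ and $q(k)=-k$ (so $q$ has real coefficients, as required). Then $p(\xi)+q(k)=(\xi-k)+i$ never vanishes, so $Z=\emptyset$, yet along $\xi=k\to\infty$ the symbol stays equal to $i$ and does not escape to infinity. In general the leading terms of $p$ and $q$ can cancel along directions in $\Z\times\R$, so there is no properness of $\sigma$ to appeal to. Notice also that your argument never uses the hypothesis that $p$ or $q$ has real coefficients; that alone is a warning sign, since this hypothesis is exactly what the paper exploits to obtain the lower bound.

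The paper's route to the uniform bound is more delicate and does not pass through any growth statement. Assuming (say) $q$ has real coefficients, one separates real and imaginary parts: one part of $p(\xi)+q(k)$ depends on $\xi$ alone, and as a polynomial in one real variable it has only finitely many real zeros $\xi_1,\dots,\xi_n$. Away from small neighbourhoods of these $\xi_j$ one uses that a nonvanishing polynomial on a closed set is bounded away from zero (polynomials are closed maps on $\R$). Near each $\xi_j$ the other part becomes $q(k)+\text{const}$, which for $k\in\Z$ is again bounded away from zero by the closed-map property, and a continuity argument propagates this to the whole neighbourhood. Patching the two regions gives $|p(\xi)+q(k)|\geq\varepsilon>0$. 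Once you have this bound your Leibniz-rule argument for the $\xi$-derivatives of $\sigma^{-1}\ftil$ is the same as the paper's and is fine.
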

\begin{proof}
    First suppose there exists $(k_0,\xi_0)\in Z$. Then the function 
    $$v(t,x) = \frac{1}{2\pi}e^{i(k_0t+\xi_0x)}$$
    satisfies $v\in\mathcal{S}'(\T^1\times\R)\backslash\mathcal{S}(\T^1\times\R)$ and $Lv=0\in\mathcal{S}(\T^1\times\R)$, so that $L$ is not SGH.\\
    Suppose now $Z$ is empty. We will assume that $q$ has real coefficients, the proof of the other case is analogous. 
    We claim that there exists $\varepsilon>0$ such that $|p(\xi)+q(k)|\geq \varepsilon$, for every $(k,\xi)\in\Z\times\R$. Indeed, if $Z$ is empty, then either $\Real(p(\xi))\neq 0$, for all $\xi\in\R$, or whenever $\Real(p(\xi))=0$, we have that $\Imag(p(\xi))+q(k)\neq0$. If $\Real(p(\xi))\neq 0$, since polynomials in one variable are closed maps, we have that there exists $\epsilon>0$ such that $\Real(p(\xi))\geq \epsilon$ and so
    \begin{equation*}
        |p(\xi)+q(k)|\geq |\Real(p(\xi))|\geq \epsilon.
    \end{equation*}
    On the other hand, if $\Real(p(\xi))=0$ for some $\xi\in\R$, then by the fundamental theorem of algebra, there are at most $\deg(p)$ values for $\xi$ where this occurs, say at $\xi_1,\dots,\xi_n$. Since $\Imag(p(\xi_j))+q(k)\neq 0$, for $j=1,\dots,n$, and $q$ is also a closed map, we have that for each $j=1,\dots,n$ there exists $\epsilon_j$ such that 
    \begin{equation}\label{closed}
        |\Imag(p(\xi_j))+q(k)|\geq \epsilon_j,
    \end{equation}
    for every $k\in\Z$ and for every $j=1,\dots,n$. Take $\epsilon=\min\{\epsilon_j,\,j=1,\dots,n\}$. Since $\Imag(p(\xi))$ is continuous, there exists $\delta>0$ such that whenever $|\xi-\xi_j|<\delta$ we have that
    \begin{equation}\label{close}
        |p(\xi)-p(\xi_j)|<\epsilon/2.
    \end{equation}
    Moreover, since $\Real(p(\xi))$ is a closed map, we have that there exists $\epsilon'>0$ such that 
    \begin{equation}\label{faraway}
        |\Real(p(\xi)|>\epsilon', 
    \end{equation}
    whenever $\xi\in\R\backslash\cup_{j=1}^n(\xi_j-\delta,\xi_j+\delta)$.
    Therefore, if $|\xi-\xi_j|<\delta$ we have that
    \begin{align*}
        |p(\xi)+q(k)|&\geq \frac{1}{2}(|\Real(p(\xi))|+(|\Imag(p(\xi_j)+q(k)|-|\Imag(p(\xi_j))-\Imag(p(\xi))|))\\
        &\geq \frac{1}{2}(\epsilon-\epsilon/2)=\epsilon/4,
    \end{align*}
    for every $k\in\Z$, by \eqref{closed} and \eqref{close}. On the other hand, if $|\xi-\xi_j|\geq \delta$, for every $j=1,\dots,n$, we have that
    \begin{equation*}
         |p(\xi)+q(k)|\geq |\Real(p(\xi))|>\epsilon',
    \end{equation*}
    for every $k\in\Z$, by \eqref{faraway}. Taking $\varepsilon=\min\{\epsilon/4,\epsilon'\}$ yields the claim. Now if $Lu=f\in\mathcal{S}(\g)$, then since $p(\xi)+q(k)$ never vanishes we have that
    \begin{equation}\label{equtilf}
    \util(k,\xi) = \frac{\ftil(k,\xi)}{i(p(\xi)+q(k))}.
    \end{equation}
    Therefore by the Leibniz rule for differentiation, and the fact that $|p(\xi)+q(k)|\geq \varepsilon$ for every $(k,\xi)\in\Z\times\R$, for every $\gamma,\beta\in\N$ we have that
$$|\partial_{\xi}^\beta\util(k,\xi)|\leq K_{\beta}\max_{\gamma\leq m-1,\,\beta'\leq \beta}|\xi^{\gamma}\partial_{\xi}^{\beta'}\ftil(k,\xi)|,$$ for some $K_{\beta}>0$ and  $m=\deg(p)$, so that $f\in\mathcal{S}(\T^1\times\R)\implies u\in\mathcal{S}(\T^1\times\R)$ and $L$ is SGH.\\
\end{proof}

\begin{corollary}\label{coro_cte_cases}

    Let $L$ be a first order constant coefficients linear differential operator on $\T^1\times\R$, so that
    $$L = c_1\partial_t+c_2\partial_x+c_3,$$
for some $c_1,c_2,c_3\in\mathbb{C}$, $c_1\neq0$ or $c_2\neq 0$. Then $L$ is Schwartz globally hypoelliptic if and only if
$$Z\defeq \{(k,\xi)\in\Z\times\R | c_1k+c_2\xi-ic_3=0\}=\emptyset.$$
\end{corollary}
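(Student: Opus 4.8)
The plan is to reduce this corollary to the preceding theorem by matching the first-order operator $L = c_1\partial_t + c_2\partial_x + c_3$ to the form $p(\partial_x) + q(\partial_t)$. Set $p(\xi) = c_2\xi + c_3$ and $q(k) = c_1 k$; then $L = p(\partial_x) + q(\partial_t)$ as operators, and one checks directly that $p(\xi) + q(k) = c_2\xi + c_1 k + c_3$, whereas the set $Z$ in the corollary is defined by $c_1 k + c_2\xi - ic_3 = 0$. So there is a discrepancy of the form $c_3$ versus $-ic_3$: the point is that when we pass from the symbol of $\partial_x$ acting on $e^{i\xi x}$ we get $i\xi$, not $\xi$, so the genuine characteristic equation for $L$ applied to $\frac{1}{2\pi}e^{i(kt+\xi x)}$ is $c_1(ik) + c_2(i\xi) + c_3 = 0$, i.e. $i(c_1 k + c_2\xi) + c_3 = 0$, which after dividing by $i$ is exactly $c_1 k + c_2\xi - ic_3 = 0$. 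Thus $Z = \emptyset$ is precisely the non-vanishing-of-the-symbol condition, and the translation between the two conventions is just this factor of $i$.

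First I would handle the case where at least one of $c_1, c_2$ is such that the hypothesis "$p$ or $q$ has real coefficients'' of the theorem can be met. If $c_1 \neq 0$, then $q(k) = c_1 k$ — but this need not have real coefficients. The cleaner route is: multiply $L$ through by a nonzero constant. Since SGH is clearly unaffected by multiplying $L$ by a nonzero scalar $\lambda$ (as $Lu = f \in \s \iff \lambda L u = \lambda f \in \s$), and the set $Z$ is also unaffected (the equation $\lambda(c_1 k + c_2 \xi - ic_3) = 0$ has the same solution set), I may rescale. If $c_1 \neq 0$, divide by $c_1$ to assume $c_1 = 1$, so $q(k) = k$ has real coefficients and the theorem applies with $p(\xi) = c_2\xi + c_3$, $q(k) = k$, both non-constant since $\deg q = 1$ and $\deg p \geq 1$ (here $c_2$ may be zero, but then $p(\xi) = c_3$ is constant — so this sub-case needs separate care; see below). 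If instead $c_1 = 0$ then $c_2 \neq 0$; divide by $c_2$ to assume $c_2 = 1$, so $p(\xi) = \xi + c_3$ has real coefficients and $q(k) = 0$ — but now $q$ is constant, again outside the theorem's scope.

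So the genuine work is the two degenerate sub-cases the theorem does not cover: (i) $c_1 \neq 0$, $c_2 = 0$, where $L = c_1\partial_t + c_3$; and (ii) $c_2 \neq 0$, $c_1 = 0$, where $L = c_2\partial_x + c_3$. In each, I would redo the short argument of the theorem directly. For the necessity direction, if $(k_0,\xi_0) \in Z$ the function $v(t,x) = \frac{1}{2\pi}e^{i(k_0 t + \xi_0 x)}$ lies in $\mathcal{S}'(\T^1\times\R) \setminus \mathcal{S}(\T^1\times\R)$ and satisfies $Lv = 0 \in \s$, exactly as before. For sufficiency, if $Z = \emptyset$ then the symbol $\sigma(k,\xi) = i(c_1 k + c_2\xi) + c_3$ never vanishes; in sub-case (i), $\sigma(k,\xi) = ic_1 k + c_3$ is independent of $\xi$ and, as a function of $k \in \Z$ alone, $|\sigma(k,\xi)|$ is bounded below by some $\varepsilon > 0$ (it takes values on a discrete set of a line that misses $0$, and $|ic_1 k + c_3| \to \infty$), so from $\util(k,\xi) = \ftil(k,\xi)/\sigma(k,\xi)$ and the Leibniz rule we get $|\partial_\xi^\beta \util(k,\xi)| \leq \varepsilon^{-1}|\partial_\xi^\beta \ftil(k,\xi)|$, whence $f \in \s \implies u \in \s$ by Proposition \ref{proppartialdecaymix}; sub-case (ii) is symmetric, with $|\sigma(k,\xi)| = |ic_2\xi + c_3| \geq \varepsilon$ for all $\xi \in \R$ when $Z = \emptyset$ (a line in $\C$ missing the origin stays a positive distance away), and division by $\sigma$ together with the Leibniz rule again gives the decay estimate. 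I expect the main obstacle to be purely bookkeeping: making sure the factor-of-$i$ convention is stated once cleanly so the reader sees why $Z$ here matches $Z$ in the theorem, and cleanly separating the degenerate cases where $p$ or $q$ becomes constant from the generic case that is a direct quotation of the theorem.
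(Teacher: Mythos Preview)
Your approach coincides with the paper's: rescale $L$ by a nonzero $c_i$ so that the corresponding coefficient becomes $1$ (hence real), and then invoke the preceding theorem. The paper's proof is a single sentence to this effect and does not separately treat the degenerate cases $c_1=0$ or $c_2=0$, in which one of the polynomials $p,q$ becomes constant and the theorem, as stated, does not literally apply; you correctly isolate these and dispatch them by the short direct symbol argument, which is a genuine gain in rigor over the paper's version rather than a different method.
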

\begin{proof}
    Indeed, if $c_i\neq 0$, $i=1$ or $i=2$, then $L$ is SGH if and only if $\frac{1}{c_i}L$ is SGH, and we can apply the previous theorem to $\frac{1}{c_i}L$. 
\end{proof}

\begin{corollary}\label{coro1}
    Let $L$ be a differential operator on $\T^1\times\R$ given by
    $$L = \partial_t+(a+ib)\partial_x+q,$$
    where $a,b\in\R,\, q\in\mathbb{C}$. Then $L$ is globally hypoelliptic if and only if:
\begin{itemize}
    \item $b\neq 0$ and $a\frac{\Real(q)}{b}+\Imag(q)\not\in\Z$ or
    \item $b=0$ and $\Real(q)\neq0$ or
    \item  $b=\Real(q)=0=a$ and $\Imag(q)\not\in\Z$.
\end{itemize}
Also let $\Tilde{L}$ be a differential operator on $\T^1\times\R$ given by
    $$\Tilde{L} = (a+ib)\partial_t+\partial_x+q,$$
    where $a,b\in\R,\,q\in\mathbb{C}$. Then $\Tilde{L}$ is globally hypoelliptic if and only if:
    \begin{itemize}
    \item $b\neq 0$ and $\frac{\Real(q)}{b}\not\in\Z$ or
    \item $b=0$ and $\Real(q)\neq0$.
\end{itemize}
\end{corollary}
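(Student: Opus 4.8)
The plan is to reduce both claims to the previous corollary (Corollary \ref{coro_cte_cases}) and then translate the condition $Z = \emptyset$ into the explicit case distinctions by separating real and imaginary parts. For $L = \partial_t + (a+ib)\partial_x + q$ we have $c_1 = 1$, $c_2 = a+ib$, $c_3 = q$, so by Corollary \ref{coro_cte_cases}, $L$ is SGH if and only if there is no $(k,\xi)\in\Z\times\R$ with $k + (a+ib)\xi - iq = 0$. Writing $q = \Real(q) + i\Imag(q)$ and splitting this single complex equation into its real and imaginary parts gives the system
\begin{align*}
    k + a\xi + \Imag(q) &= 0,\\
    b\xi - \Real(q) &= 0.
\end{align*}
The strategy is then to solve this system over $\xi\in\R$, $k\in\Z$ and negate the solvability condition.

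First I would treat the case $b \neq 0$. Then the second equation forces $\xi = \Real(q)/b$, which is a genuine real number, and substituting into the first equation gives $k = -a\Real(q)/b - \Imag(q)$. Such an integer $k$ exists precisely when $a\Real(q)/b + \Imag(q) \in \Z$; hence $Z = \emptyset$ (i.e.\ $L$ is SGH) if and only if $a\Real(q)/b + \Imag(q)\notin\Z$, which is the first bullet. Next, the case $b = 0$: the second equation becomes $\Real(q) = 0$, a condition with no free variables. If $\Real(q)\neq 0$ the system is inconsistent, so $Z=\emptyset$ and $L$ is SGH — the second bullet. If instead $\Real(q) = 0$, the second equation is automatically satisfied and we are left with $k + a\xi + \Imag(q) = 0$; I would argue that if $a\neq 0$ then for any $k\in\Z$ one can solve $\xi = -(k+\Imag(q))/a\in\R$, so $Z\neq\emptyset$ and $L$ is not SGH, whereas if $a = 0$ the equation reduces to $k = -\Imag(q)$, solvable iff $\Imag(q)\in\Z$; thus in the sub-case $b = \Real(q) = a = 0$ one needs $\Imag(q)\notin\Z$, the third bullet. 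Assembling these three disjoint cases gives exactly the stated list.

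For $\tilde L = (a+ib)\partial_t + \partial_x + q$ the argument is the mirror image: now $c_1 = a+ib$, $c_2 = 1$, $c_3 = q$, so the relevant equation is $(a+ib)k + \xi - iq = 0$, whose real and imaginary parts are
\begin{align*}
    ak + \xi + \Imag(q) &= 0,\\
    bk - \Real(q) &= 0.
\end{align*}
Here $k$ is the integer and $\xi$ the free real parameter. If $b\neq 0$, the second equation forces $k = \Real(q)/b$, which lies in $\Z$ iff $\Real(q)/b\in\Z$; when it does, the first equation determines $\xi = -ak - \Imag(q)\in\R$ with no obstruction. Hence $Z = \emptyset$ iff $\Real(q)/b\notin\Z$, the first bullet. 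If $b = 0$, the second equation reads $\Real(q) = 0$; when $\Real(q)\neq 0$ the system is inconsistent so $\tilde L$ is SGH, and when $\Real(q) = 0$ we can pick any $k\in\Z$ and solve $\xi = -ak - \Imag(q)\in\R$, so $Z\neq\emptyset$ and $\tilde L$ is not SGH. This yields the two bullets for $\tilde L$ (note the asymmetry with $L$: because $\xi$ ranges over all of $\R$ while $k$ ranges over $\Z$, the degenerate sub-case that produced the third bullet for $L$ does not arise here).

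There is no serious obstacle: everything is elementary linear algebra over the pair $(\Z,\R)$ once Corollary \ref{coro_cte_cases} is in hand. The only point requiring a little care is bookkeeping — making sure the cases are exhaustive and mutually consistent, and correctly tracking which variable ($k$ or $\xi$) is constrained to the integers in each of the two operators, since this is precisely what breaks the symmetry between the bullet lists for $L$ and $\tilde L$.
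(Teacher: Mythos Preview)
Your argument is correct and is exactly the intended derivation: the paper states Corollary~\ref{coro1} without proof, as an immediate consequence of Corollary~\ref{coro_cte_cases}, and your case analysis by separating real and imaginary parts of $c_1k+c_2\xi-ic_3=0$ is precisely how one unpacks that condition. The bookkeeping is accurate, including the asymmetry you note between $L$ and $\tilde L$ coming from $k\in\Z$ versus $\xi\in\R$.
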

Note that in this last case, the global hypoellipticity of $\Tilde{L}$ does not depend on the value of $a$, nor $\Imag(q)$.\\
\begin{exemp}\label{exe1}
    Let $L_1$ and $L_2$ be the differential operators on $\T^1\times\R$ given by
    \begin{equation*}
        L_1=\partial_t+(a+bi)\partial_x+q,
    \end{equation*}
    and 
    \begin{equation*}
         L_2=(a+bi)\partial_t+\partial_x+q,
    \end{equation*}
    where $a,b,\Real(q),\Imag(q)\in\Z$, $b\neq0$. Then by Corollary \ref{coro1} we have that $L_1$ and $L_2$ are globally hypoelliptic if and only if $b$ does not divide $\Real(q)$.
\end{exemp}

\subsection{Variable Coefficients - Real case}

In this section, we prove that a differential operator given by
$$L = \partial_t+a(t)\partial_x+q(t),$$
where $a,q\in C^\infty(\T^1)$ and $a$ is real valued, is Schwartz globally hypoelliptic in $\T^1\times\R$ if and only in $L_0 = \partial_t+a_0\partial_x+q_0$ is Schwartz globally hypoelliptic, where 
$$a_0=\frac{1}{2\pi}\int_0^{2\pi} a(t)dt\quad\text{ and }\quad q_0=\frac{1}{2\pi}\int_0^{2\pi}q(t)dt.$$
The idea is to find an operator $\Psi:\mathcal{S}'(\T^1\times\R)\to\mathcal{S}'(\T^1\times\R)$ which preserves $\mathcal{S}(\g)$ and conjugates $L$ with $L_0$.\\
First, we deal with the function $a$.
\begin{prop}
    Let $a\in C^\infty(\T^1)$, $a_0\in\mathbb{R}$ be as before. Then the operator $\Psi_a:\mathcal{S}'(\T^1\times\R)\to\mathcal{S}'(\T^1\times\R)$ given by
    \begin{align*}
            (\Psi_au)(t,x) &= \frac{1}{2\pi}\int_{\R} e^{i\xi A(t)}\widehat{u}(t,\xi)e^{ix\xi}d\xi\\
            &= \mathcal{F}_{\R}^{-1}\left\{e^{i\xi A(t)}\widehat{u}(t,\xi)\right\},
    \end{align*}
    where 
    $$A(t) = \int_0^{t}a(s)ds-a_0t,$$
    is well defined and defines an automorphism over $\mathcal{S}(\T^1\times\R)$.
\end{prop}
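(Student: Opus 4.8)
The plan is to verify three things in turn: (i) that the formula defining $\Psi_a u$ makes sense on all of $\mathcal{S}'(\T^1\times\R)$; (ii) that $\Psi_a$ maps $\mathcal{S}(\T^1\times\R)$ into itself; (iii) that it is a bijection of $\mathcal{S}(\T^1\times\R)$ with continuous inverse. The essential observation driving everything is that $A\in C^\infty(\T^1)$: since $A(0)=0$ and $A(2\pi)=\int_0^{2\pi}a(s)\,ds-2\pi a_0=0$, the function $A$ is $2\pi$-periodic and smooth, so $t\mapsto e^{i\xi A(t)}$ is a smooth function on $\T^1$ for each fixed $\xi$, with all $t$-derivatives growing at most polynomially in $\xi$ (by the Faà di Bruno / Leibniz rule applied to $e^{i\xi A(t)}$, each $\partial_t^\alpha e^{i\xi A(t)}$ is a polynomial in $\xi$ of degree $|\alpha|$ with smooth bounded coefficients). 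Hence the multiplier $m(t,\xi)\defeq e^{i\xi A(t)}$ is exactly the kind of symbol — smooth, polynomial growth in $\xi$ uniformly in $t$ together with all derivatives — under which multiplication is well defined on $\mathcal{S}'(\G)$, as noted after Corollary \ref{corodistribmixinv}. So I would define $\Psi_a u \defeq \mathcal{F}_{\R}^{-1}\big( m(t,\xi)\,\mathcal{F}_{\R}(u)\big)$, which by that remark lies in $\mathcal{S}'(\T^1\times\R)$; for $u$ a function the integral formula is recovered by Fubini.

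For step (ii), let $f\in\mathcal{S}(\T^1\times\R)$. By Corollary \ref{corofourierschwartz}, $\widehat{f}=\mathcal{F}_{\R}(f)\in\mathcal{S}(\T^1\times\R)$, so it suffices to show $m(t,\xi)\widehat f(t,\xi)\in\mathcal{S}(\T^1\times\R)$ and then apply Corollary \ref{corofourierschwartz} again together with the fact that $\mathcal{F}_{\R}^{-1}$ preserves $\mathcal{S}(\G)$ (from the discussion following Proposition \ref{propdecaypartialxi}). To check $m\widehat f\in\mathcal{S}(\G)$ I would use the characterization in Proposition \ref{propdecaypartialxi}: estimate $\partial_t^\alpha\partial_\xi^\beta\big(e^{i\xi A(t)}\widehat f(t,\xi)\big)$ by the Leibniz rule. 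Every $\xi$-derivative falling on $e^{i\xi A(t)}$ produces a factor $A(t)$ (bounded), every $t$-derivative produces factors of $\xi$ times bounded derivatives of $A$; since $|e^{i\xi A(t)}|=1$, the worst term is bounded by a finite sum of $\|A\|$-type constants times $|\xi|^{|\alpha|}\,|\partial_t^{\alpha'}\partial_\xi^{\beta'}\widehat f(t,\xi)|$, and each such term is $\leq C_{N}(1+|\xi|^2)^{-N/2}$ by the rapid decay of $\widehat f$ (Proposition \ref{propdecaypartialxi}). Absorbing the $|\xi|^{|\alpha|}$ into the decay gives the required bound, so $m\widehat f\in\mathcal{S}(\G)$, hence $\Psi_a f\in\mathcal{S}(\G)$, and the same estimates (being continuous in the seminorms $p_N$) show $\Psi_a:\mathcal{S}(\G)\to\mathcal{S}(\G)$ is continuous.

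For step (iii), the inverse is the obvious candidate: $\Psi_a^{-1}u = \mathcal{F}_{\R}^{-1}\big(e^{-i\xi A(t)}\mathcal{F}_{\R}(u)\big)$, i.e. the same construction with $a$ replaced by $-a$ (equivalently $A$ by $-A$), which is of exactly the same form; that $-A$ is again the corrector for $-a$ with mean $-a_0$ is immediate. Since $e^{i\xi A(t)}e^{-i\xi A(t)}=1$ pointwise and multiplication by these symbols commutes with $\mathcal{F}_{\R}\mathcal{F}_{\R}^{-1}=\mathrm{id}$, one gets $\Psi_a\Psi_{-a}=\Psi_{-a}\Psi_a=\mathrm{id}$ on $\mathcal{S}'(\G)$, and restricting, on $\mathcal{S}(\G)$; by step (ii) applied to $-a$, $\Psi_a^{-1}$ is also continuous on $\mathcal{S}(\G)$. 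Therefore $\Psi_a$ is an automorphism of $\mathcal{S}(\T^1\times\R)$, which is the claim. The only point requiring genuine care — the ``main obstacle'' — is the uniform-in-$t$ polynomial control of $\partial_t^\alpha e^{i\xi A(t)}$ in step (ii): one must check that differentiating the exponential in $t$ does not spoil membership in $\mathcal{S}(\G)$, and this is precisely where periodicity and smoothness of $A$ (so that $A$ and all its derivatives are bounded on $\T^1$) are used.
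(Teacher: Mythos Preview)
Your proposal is correct and follows essentially the same strategy as the paper: identify the explicit inverse $\Psi_a^{-1}=\mathcal{F}_{\R}^{-1}\{e^{-i\xi A(t)}\widehat{u}\}$, and verify that $e^{i\xi A(t)}\widehat{u}(t,\xi)$ satisfies the seminorm estimates of Proposition~\ref{propdecaypartialxi} via the Leibniz rule together with boundedness of $A$ and its derivatives on $\T^1$. If anything, you are more careful than the paper, which only writes out the $\partial_\xi^\beta$ estimate and leaves the $\partial_t^\alpha$ part (where the polynomial-in-$\xi$ factors actually appear) implicit.
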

\begin{proof}
    Using that $A\in C^\infty(\T^1)$ is real valued and the torus is compact, a simple calculation shows that $\Psi_a$ is well defined. Also, clearly $\Psi_a$ is invertible with $\Psi_a^{-1}$ given by
    \begin{align*}
        \Psi_a^{-1}u(t,x) &=\frac{1}{2\pi}\int_{\R} e^{-i\xi A(t)}\widehat{u}(t,\xi)e^{ix\xi}d\xi\\
            &= \mathcal{F}_{\R}^{-1}\left\{e^{-i\xi A(t)}\widehat{u}(t,\xi)\right\}. 
    \end{align*}
    Now let $u\in\mathcal{S}(\T^1\times\R)$, then for each $\gamma,\beta\in\N_0$ we have that
    \begin{align*}
        |\partial_\xi^\beta \widehat{\Psi_au}(t,\xi)|&=|\partial_{\xi}^\beta\left\{e^{i\xi A(t)}\widehat{u}(t,\xi)\right\}|\\
        &\leq \sum_{\beta'\leq \beta}C_{\beta'}|\xi^{\beta'}\partial_{\xi}^{\beta-\beta'}\widehat{u}(t,\xi)|,
    \end{align*}
    for some $C_{\beta'}>0$, so that by Proposition (\ref{propdecaypartialxi}) we have that $\Psi_au\in\mathcal{S}(\T^1\times\R)$. A similar argument works for $\Psi_a^{-1}$, so that $\Psi_a$ is an automporphism of $\mathcal{S}(\T^1\times\R)$.
\end{proof}

\begin{prop}
    Let $a$, $\Psi_a$ and $L$ be as before. Then we have that
    $$L_{a_0}\circ\Psi_{a} = \Psi_a\circ L_a,$$
    where 
    $$L_{a_0} = \partial_t+a_0\partial_x+q(t).$$
\end{prop}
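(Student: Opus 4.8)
The plan is to verify the identity after taking the partial Fourier transform $\mathcal{F}_{\R}$ in the variable $x$. Since $\mathcal{F}_{\R}$ is a bijection of $\mathcal{S}'(\T^1\times\R)$ onto itself (its inverse $\mathcal{F}_{\R}^{-1}$ being the one recorded after Proposition~\ref{proppartialdecayrndistribinv}, with $\mathcal{F}_{\R}\mathcal{F}_{\R}^{-1}=\mathrm{id}=\mathcal{F}_{\R}^{-1}\mathcal{F}_{\R}$ on $\mathcal{S}'(\T^1\times\R)$ by duality from Corollary~\ref{corofourierschwartz}), it suffices to show
$$\mathcal{F}_{\R}\big(L_{a_0}\Psi_a u\big)(t,\xi)=\mathcal{F}_{\R}\big(\Psi_a L_a u\big)(t,\xi)\qquad\text{for every }u\in\mathcal{S}'(\T^1\times\R).$$
First I would note that, by the definition of $\Psi_a$, one has $\widehat{\Psi_a u}(t,\xi)=e^{i\xi A(t)}\widehat{u}(t,\xi)$, and that $e^{i\xi A(t)}$ is a legitimate multiplier on $\mathcal{S}'(\T^1\times\R)$: since $A\in C^\infty(\T^1)$ is bounded, each derivative $\partial_t^\alpha\partial_\xi^\beta e^{i\xi A(t)}$ equals $e^{i\xi A(t)}$ times a polynomial in $\xi$ with bounded coefficients, so it has polynomial growth in $\xi$ uniformly in $t$ together with all its derivatives. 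Hence the products appearing below are well defined, and $\Psi_a u\in\mathcal{S}'(\T^1\times\R)$.

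Next I would apply the routine transformation rules valid in $\mathcal{S}'(\T^1\times\R)$, namely $\mathcal{F}_{\R}(\partial_t v)=\partial_t\widehat v$ (differentiation in $t$ commutes with $\mathcal{F}_{\R}$), $\mathcal{F}_{\R}(\partial_x v)=i\xi\,\widehat v$, and $\mathcal{F}_{\R}(g(t)v)=g(t)\widehat v$ for $g\in C^\infty(\T^1)$. Writing $L_{a_0}=\partial_t+a_0\partial_x+q(t)$, this gives
$$\mathcal{F}_{\R}\big(L_{a_0}\Psi_a u\big)=\partial_t\!\big(e^{i\xi A(t)}\widehat u\big)+i\xi a_0\,e^{i\xi A(t)}\widehat u+q(t)\,e^{i\xi A(t)}\widehat u.$$
The only computation is the Leibniz rule for the first term, using $A'(t)=a(t)-a_0$:
$$\partial_t\!\big(e^{i\xi A(t)}\widehat u\big)=i\xi\,\big(a(t)-a_0\big)\,e^{i\xi A(t)}\widehat u+e^{i\xi A(t)}\,\partial_t\widehat u.$$
Substituting, the terms $-i\xi a_0 e^{i\xi A(t)}\widehat u$ and $+i\xi a_0 e^{i\xi A(t)}\widehat u$ cancel, leaving
$$\mathcal{F}_{\R}\big(L_{a_0}\Psi_a u\big)=e^{i\xi A(t)}\big(\partial_t\widehat u+i\xi\,a(t)\,\widehat u+q(t)\,\widehat u\big).$$

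Finally I would expand the other side directly: applying the same rules to $L_a=\partial_t+a(t)\partial_x+q(t)$ and then multiplying by $e^{i\xi A(t)}$ gives
$$\mathcal{F}_{\R}\big(\Psi_a L_a u\big)=e^{i\xi A(t)}\,\widehat{L_a u}=e^{i\xi A(t)}\big(\partial_t\widehat u+i\xi\,a(t)\,\widehat u+q(t)\,\widehat u\big),$$
which is exactly the expression obtained above. Applying $\mathcal{F}_{\R}^{-1}$ yields $L_{a_0}\Psi_a u=\Psi_a L_a u$ for all $u\in\mathcal{S}'(\T^1\times\R)$, which is the claim. The argument is in essence a one-line computation; the only points needing care are purely of a bookkeeping nature — checking that $e^{i\xi A(t)}$ is a genuine multiplier on $\mathcal{S}'(\T^1\times\R)$ and that the Leibniz rule in $t$ and the commutation $\mathcal{F}_{\R}\partial_t=\partial_t\mathcal{F}_{\R}$ are legitimate at the level of tempered distributions on $\T^1\times\R$, both of which are immediate from the framework of Section~2.
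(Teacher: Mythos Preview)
Your proof is correct and follows essentially the same approach as the paper: both verify the identity by applying $\mathcal{F}_{\R}$, using $\widehat{\Psi_a u}(t,\xi)=e^{i\xi A(t)}\widehat u(t,\xi)$, the Leibniz rule with $A'(t)=a(t)-a_0$, and the standard Fourier rules $\mathcal{F}_{\R}(\partial_x v)=i\xi\widehat v$, $\mathcal{F}_{\R}(\partial_t v)=\partial_t\widehat v$. If anything, your write-up is slightly more careful than the paper's in justifying that $e^{i\xi A(t)}$ is a legitimate multiplier on $\mathcal{S}'(\T^1\times\R)$ and that the manipulations are valid at the distributional level.
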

\begin{proof}
    Note that, for $u\in\mathcal{S}'(\g)$, we have that
    $$L_{a_0}\circ\Psi_au = \partial_t\Psi_au+a_0\partial_x\Psi_au+q(t)\Psi_au.$$
    Taking the partial Fourier transform in $\R$ yields
    \begin{align}
    \partial_t\widehat{\Psi_au}(t,\xi)+i\xi &a_0\widehat{\Psi_au}(t,\xi)+q(t)\widehat{\Psi_au}(t,\xi) \notag \\
    =&\partial_t\left\{e^{i\xi A(t)}\widehat{u}(t,\xi)\right\}+i\xi a_0e^{i\xi A(t)}\widehat{u}(t,\xi)+q(t)e^{i\xi A(t)}\widehat{u}(t,\xi).\label{eqautom}
    \end{align}
    But as 
    \begin{align*}
        \partial_t\left\{e^{i\xi A(t)}\widehat{u}(t,\xi)\right\} = i\xi(a(t)-a_0)e^{i\xi A(t)}\widehat{u}(t,\xi)+e^{i\xi A(t)}\partial_t\widehat{u}(t,\xi),
    \end{align*}
     equation \eqref{eqautom} implies that
    \begin{align*}
        \widehat{L_{a_0}\circ\Psi_au}(t,\xi) &= e^{i\xi A(t)}(\partial_t\widehat{u}(t,\xi)+i\xi a(t)\widehat{u}(t,\xi)+q(t)\widehat{u}(t,\xi))\\
        &=\widehat{\Psi_a\circ L u}(t,\xi),
    \end{align*}
    from which the claim follows.
\end{proof}

\begin{prop}
    Let $q\in C^\infty(\T^1)$, $q_0\in\mathbb{C}$ be as before. Then the operator $\Psi_{q}:\mathcal{S}'(\g)\to\mathcal{S}'(\g)$ given by
$$(\Psi_qu)(t,x) = e^{Q(t)}{u}(t,x),$$
    where 
    $$Q(t) = \int_{0}^{t}q(s)ds-q_0t,$$
    is well defined and an automorphism of $\mathcal{S}(\g)$.
\end{prop}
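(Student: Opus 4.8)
The plan is to reduce the whole statement to the single observation that $Q$ defines a \emph{smooth function on the compact torus} $\T^1$; after that, everything is a routine Leibniz-rule computation, entirely parallel to the one already carried out for $\Psi_a$.

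First I would check that $Q\in C^\infty(\T^1)$. Since $q$ is $2\pi$-periodic and $q_0=\frac{1}{2\pi}\int_0^{2\pi}q(s)\,ds$, a direct computation gives $Q(t+2\pi)=\int_0^{2\pi}q(s)\,ds-2\pi q_0+\int_0^{t}q(s)\,ds-q_0t=Q(t)$, so $Q$ descends to a smooth $2\pi$-periodic function with $Q'=q-q_0$. (This is precisely why the correction term $-q_0t$ is subtracted.) Consequently $e^{\pm Q(t)}$ are smooth on $\T^1$, and by compactness each derivative $\partial_t^{\alpha}e^{\pm Q(t)}$ is bounded; in particular $e^{\pm Q}$ are admissible multipliers in the sense of the discussion following Corollary~\ref{corodistribmixinv} (bounded uniformly in $t$ together with all their derivatives, and trivially of polynomial growth in $x$, on which they do not depend). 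This already makes $\Psi_q u:=e^{Q(t)}u$ well defined on $\mathcal{S}'(\g)$ via $\langle\Psi_qu,f\rangle=\langle u,e^{Q(t)}f\rangle$, provided $f\mapsto e^{Q(t)}f$ is continuous on $\mathcal{S}(\g)$, which I verify next.

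Let $f\in\mathcal{S}(\g)$. By the Leibniz rule,
\[
x^{\gamma}\partial_t^{\alpha}\partial_x^{\beta}\big(e^{Q(t)}f(t,x)\big)=\sum_{\alpha'\leq\alpha}\binom{\alpha}{\alpha'}\big(\partial_t^{\alpha'}e^{Q(t)}\big)\,x^{\gamma}\partial_t^{\alpha-\alpha'}\partial_x^{\beta}f(t,x),
\]
whence $\|e^{Q}f\|_{\alpha,\beta,\gamma}\leq\sum_{\alpha'\leq\alpha}\binom{\alpha}{\alpha'}\|\partial_t^{\alpha'}e^{Q}\|_{\infty}\|f\|_{\alpha-\alpha',\beta,\gamma}<+\infty$, and summing over $|\alpha|+|\beta|+|\gamma|\leq N$ gives $p_N(\Psi_qf)\leq C_N\,p_N(f)$ for suitable $C_N>0$. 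Hence $\Psi_qf\in\mathcal{S}(\g)$ and $\Psi_q$ is continuous on $\mathcal{S}(\g)$. Running the identical estimate with $e^{-Q(t)}$ in place of $e^{Q(t)}$ shows that $u\mapsto e^{-Q(t)}u$ maps $\mathcal{S}(\g)$ continuously into itself; since $e^{Q(t)}e^{-Q(t)}=1$ pointwise, this map is a two-sided inverse of $\Psi_q$, so $\Psi_q$ is an automorphism of $\mathcal{S}(\g)$.

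I do not expect a genuine obstacle here: the only substantive point is the $2\pi$-periodicity of $Q$, and once $e^{\pm Q}$ are recognized as bounded smooth functions on the compact factor the argument is the standard multiplier argument. The one place to be slightly careful is that, because $q$ may be complex valued, $e^{Q(t)}$ need not have modulus one; but this plays no role, since only the boundedness of $e^{\pm Q}$ and of its $t$-derivatives is used.
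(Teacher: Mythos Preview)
Your proof is correct and follows exactly the same approach as the paper, which simply asserts that ``this follows from the definitions and compacity of the torus.'' You have merely written out the details (periodicity of $Q$, boundedness of $\partial_t^\alpha e^{\pm Q}$ by compactness, and the Leibniz-rule estimate) that the paper leaves implicit.
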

\begin{proof}
    This follows from the definitions and compacity of the torus.
\end{proof}
The next proposition is a simple calculation, and its proof is left to the reader.
\begin{prop}
    Let $q$, $\Psi_q$, $L_{a_0}$ and $L_0$ be as before. We then have that
    $$L_0\circ \Psi_q = \Psi_q\circ L_{a_0}.$$
\end{prop}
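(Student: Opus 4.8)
The plan is to verify the identity by a direct computation, applying both sides to an arbitrary $u\in\mathcal{S}'(\g)$ and using the Leibniz rule for the product of a distribution with a smooth multiplier. Since $Q\in C^\infty(\T^1)$ and the torus is compact, $e^{Q(t)}$ is a smooth function, bounded together with all its derivatives, hence a legitimate multiplier on $\mathcal{S}'(\g)$ in the sense discussed after Corollary~\ref{corodistribmixinv}, and $\Psi_q u = e^{Q(t)}u$ is well defined with the usual differentiation rules valid in $\mathcal{S}'(\g)$.

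First I would expand the left-hand side:
\begin{align*}
    L_0(\Psi_q u) &= \partial_t\!\left(e^{Q(t)}u\right) + a_0\,\partial_x\!\left(e^{Q(t)}u\right) + q_0\,e^{Q(t)}u\\
    &= Q'(t)\,e^{Q(t)}u + e^{Q(t)}\partial_t u + a_0\,e^{Q(t)}\partial_x u + q_0\,e^{Q(t)}u,
\end{align*}
where in the $\partial_x$ term I used that $e^{Q(t)}$ does not depend on $x$, so it commutes with $\partial_x$. Next I would substitute $Q'(t) = q(t) - q_0$, which is immediate from the definition $Q(t)=\int_0^t q(s)\,ds - q_0 t$. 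This gives
\begin{align*}
    L_0(\Psi_q u) &= \bigl(q(t)-q_0\bigr)e^{Q(t)}u + e^{Q(t)}\partial_t u + a_0\,e^{Q(t)}\partial_x u + q_0\,e^{Q(t)}u\\
    &= e^{Q(t)}\bigl(\partial_t u + a_0\,\partial_x u + q(t)u\bigr)\\
    &= e^{Q(t)}\,(L_{a_0}u) = \Psi_q(L_{a_0}u),
\end{align*}
where the cancellation of the two $q_0\,e^{Q(t)}u$ terms is the only thing that actually happens, and it follows precisely because $Q$ was chosen with mean-value correction $-q_0 t$.

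There is essentially no obstacle here: the statement is the routine "conjugation" computation that motivated the definition of $\Psi_q$, and the only point requiring (minimal) care is the justification that the Leibniz rule $\partial_t(e^{Q}u) = Q'e^{Q}u + e^{Q}\partial_t u$ holds in $\mathcal{S}'(\g)$, which is covered by the discussion of products and derivatives of tempered distributions on $\G$ given earlier. Combining this proposition with the two preceding ones yields the chain $L_0\circ\Psi_q\circ\Psi_a = \Psi_q\circ L_{a_0}\circ\Psi_a = \Psi_q\circ\Psi_a\circ L$, so that $\Psi\defeq\Psi_q\circ\Psi_a$ is the desired automorphism of $\mathcal{S}(\g)$ conjugating $L$ with $L_0$, which is what the section is building toward.
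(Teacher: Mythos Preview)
Your proof is correct and is exactly the ``simple calculation'' the paper leaves to the reader: a direct application of the Leibniz rule to $\partial_t(e^{Q(t)}u)$ together with $Q'(t)=q(t)-q_0$, which produces the required cancellation. There is nothing to add; the paper gives no alternative argument.
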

Finally we obtain the necessary conjugation and equivalence, as follows.
\begin{corollary}\label{coro_real}
    Let $\Psi_a,\Psi_q$ be as before. Then 
    $$L_0\circ\Psi= L\circ \Psi$$
    and
    $$L \circ \Psi^{-1} = \Psi^{-1}\circ L_0,$$
    where $\Psi = \Psi_a\circ\Psi_q$. Therefore $L$ is Schwartz globally hypoelliptic if and only if $L_0$ is Schwartz globally hypoelliptic.
\end{corollary}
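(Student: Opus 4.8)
The plan is to chain together the intertwining identities established in the three preceding propositions and then transport the SGH property across the resulting conjugation, exactly as one does for a similarity of operators on a fixed space.

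First I would record that $\Psi=\Psi_a\circ\Psi_q$ maps $\mathcal{S}'(\g)$ bijectively onto itself and restricts to an automorphism of $\mathcal{S}(\g)$: both $\Psi_a$ and $\Psi_q$ have these properties by the propositions above (with explicit inverses $\Psi_a^{-1}$ and $\Psi_q^{-1}$), and a composition of automorphisms is again one, with inverse $\Psi^{-1}=\Psi_q^{-1}\circ\Psi_a^{-1}$. I would also observe that $\Psi_a$ and $\Psi_q$ commute: $\Psi_q$ is multiplication by the $x$-independent factor $e^{Q(t)}$, which commutes with $\mathcal{F}_{\R}^{\pm1}$ and with multiplication by $e^{i\xi A(t)}$, so that $\Psi=\Psi_a\circ\Psi_q=\Psi_q\circ\Psi_a$. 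Then, using first the proposition intertwining $L_0$ with $L_{a_0}$ via $\Psi_q$ and then the one intertwining $L_{a_0}$ with $L$ via $\Psi_a$,
\[
L_0\circ\Psi=L_0\circ\Psi_q\circ\Psi_a=\Psi_q\circ L_{a_0}\circ\Psi_a=\Psi_q\circ\Psi_a\circ L=\Psi\circ L ,
\]
which is the first asserted identity (to be read as $L_0\circ\Psi=\Psi\circ L$); composing it with $\Psi^{-1}$ on the left and then on the right yields $\Psi^{-1}\circ L_0=L\circ\Psi^{-1}$, the second identity.

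For the equivalence of Schwartz global hypoellipticity, suppose $L_0$ is SGH and let $u\in\mathcal{S}'(\g)$ satisfy $Lu=f\in\mathcal{S}(\g)$. Applying $\Psi$ and using $\Psi\circ L=L_0\circ\Psi$ gives $L_0(\Psi u)=\Psi f$; since $\Psi$ preserves $\mathcal{S}(\g)$ we have $\Psi f\in\mathcal{S}(\g)$, and since $\Psi u\in\mathcal{S}'(\g)$, SGH of $L_0$ forces $\Psi u\in\mathcal{S}(\g)$, hence $u=\Psi^{-1}(\Psi u)\in\mathcal{S}(\g)$ because $\Psi^{-1}$ also preserves $\mathcal{S}(\g)$; thus $L$ is SGH. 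The converse is symmetric, exchanging the roles of $L$ and $L_0$ and of $\Psi$ and $\Psi^{-1}$ and invoking the second identity. I do not expect any genuine obstacle here beyond bookkeeping; the only points requiring a line of care are the commutativity of $\Psi_a$ and $\Psi_q$ (or, equivalently, fixing the order of composition so that the two intertwining relations apply consecutively) and the fact that all compositions are legitimate on $\mathcal{S}'(\g)$, which is automatic since each factor has already been shown to map $\mathcal{S}'(\g)$ into itself.
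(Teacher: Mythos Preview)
Your argument is correct and follows the same approach as the paper: chain the two intertwining relations and transport SGH along the conjugation. You are actually more careful than the paper on one point: since the corollary defines $\Psi=\Psi_a\circ\Psi_q$ while the direct composition of the two propositions yields $L_0\circ(\Psi_q\circ\Psi_a)=(\Psi_q\circ\Psi_a)\circ L$, your verification that $\Psi_a$ and $\Psi_q$ commute (multiplication by an $x$-independent factor commutes with the partial Fourier conjugation) is exactly what is needed to reconcile the order of composition, whereas the paper simply declares the identities an ``immediate consequence'' without addressing this.
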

\begin{proof}
    The equalities above are an immediate consequence of the previous propositions. The proof of the second claim is as follows: note that if $L$ is SGH and $L_0u=f\in\mathcal{S}(\g)$, then $L\circ \Psi^{-1} u = \Psi^{-1}\circ L_0u=\Psi^{-1} f\in\mathcal{S}(\g)$ so that $\Psi^{-1} u\in\mathcal{S}(\g)\implies u\in\mathcal{S}(\g)$ and so $L_0$ is SGH. On the other hand, if $L_0$ is SGH and $Lu=f\in\mathcal{S}(\g)$, then $L_0\circ\Psi u = \Psi\circ L u=\Psi f\in\mathcal{S}(\g)$, so that $\Psi u\in\mathcal{S}(\g)\implies u\in\mathcal{S}(\g)$ and so $L$ is SGH.
\end{proof}
\begin{exemp}
    Let $L_1$ be the differential operator on $\T^1\times\R$ given by
    \begin{equation*}
        L_1=\partial_t+a(\sin(t)+1)\partial_x+q,
    \end{equation*}
    where $a\in\R$, $q\in\mathbb{C}$. Then by Corollary \ref{coro_real} and Theorem \ref{coro_cte_cases}, the operator $L_1$ is SGH if and only if $\Real(q)\neq 0$, or $\Real(q)=a=0$ and $\Imag(q)\not\in\Z$.
\end{exemp}
\bibliographystyle{plain} 
\bibliography{references} 
\nocite{*}
\end{document}